\documentclass[11pt,reqno]{amsart}
\usepackage[margin=1.15in]{geometry}
\usepackage{amsmath,amssymb,amsthm,mathtools}
\usepackage{xcolor}
\usepackage[colorlinks,linkcolor=blue!55!black,citecolor=blue!55!black,
            urlcolor=blue!55!black]{hyperref}
\usepackage{microtype}

\theoremstyle{plain}
\newtheorem{theorem}{Theorem}[section]
\newtheorem{lemma}[theorem]{Lemma}
\newtheorem{proposition}[theorem]{Proposition}

\theoremstyle{definition}
\newtheorem{remark}[theorem]{Remark}

\DeclareMathOperator{\Real}{Re}
\DeclareMathOperator{\dist}{dist}
\newcommand{\D}{\mathbb{D}}
\newcommand{\C}{\mathbb{C}}
\newcommand{\R}{\mathbb{R}}
\newcommand{\Z}{\mathbb{Z}}
\newcommand{\Bcl}{\mathcal{B}}

\DeclarePairedDelimiter{\abs}{\lvert}{\rvert}

\begin{document}

\title{An improved lower bound for Bloch's constant}
\author{Frank Wikström}
\address{Centre for Mathematical Sciences, Lund University,
Box 118, SE-221 00 Lund, Sweden}
\email{frank.wikstrom@math.lth.se}

\begin{abstract}
Let $B$ denote Bloch's constant. We prove
\[
  B \ge \frac{\sqrt{3}}{4}+0.0153
    = 0.448312701\ldots,
\]
improving the lower bounds by Chen--Gauthier ($\sqrt{3}/4+2\cdot10^{-4}$) and
Xiong ($\sqrt{3}/4+3\cdot10^{-4}$). The proof refines Bonk's method through
computer-assisted estimates rigorously verified using interval arithmetic.
\end{abstract}

\subjclass[2020]{Primary 30C75; Secondary 30C25, 30H30, 65G20, 65G30}

\keywords{Bloch's constant, Bloch functions, geometric function theory,
computer-assisted proof, rigorous numerics, interval arithmetic, linear
programming}

\maketitle

\section{Introduction}

For $F$ holomorphic on the unit disc $\D$ with $F'(0) = 1$, let $\beta(F)$ be
the supremum of the radii of discs contained in $F(\D)$ that are the
one-to-one image of some subdomain of $\D$ (\emph{schlicht discs}). Bloch's
constant is
\[
  B = \inf \bigl\{ \beta(F) : F \in \mathcal{H}(\D),\ F'(0) = 1 \bigr\}.
\]
Bloch~\cite{Bloch} himself showed that $B > \frac{1}{72}$, but the classical
bounds are
\[
  \frac{\sqrt{3}}{4} = 0.4330127\ldots \le B \le
  \frac{\Gamma(1/3)\,\Gamma(11/12)}{\Gamma(1/4)\sqrt{1+\sqrt{3}}}
  = 0.4718617\ldots,
\]
where the lower bound goes back to Ahlfors~\cite{Ahlfors} and the upper bound to
Ahlfors and Grunsky~\cite{AhlforsGrunsky}, who conjectured it to be the true
value. Heins~\cite{Heins} proved that the lower inequality is strict, but
without obtaining an explicit increment. Minda's systematic
study~\cite{Minda} placed the classical problem in a broader framework of
Bloch constants associated with families of maps and with Euclidean,
hyperbolic, and spherical geometry. The bound resisted quantitative
improvement until 1990, when Bonk~\cite{Bonk} obtained
$B \ge \sqrt{3}/4 + 10^{-14}$. Chen and Gauthier~\cite{ChenGauthier} later
refined Bonk's method to $B \ge \sqrt{3}/4 + 2\cdot10^{-4}$. Xiong subsequently
gave the bound
$B \ge \sqrt{3}/4 + 3\cdot10^{-4}$~\cite{Xiong}.\footnote{We have not been
able to obtain a copy of Xiong's paper. The stated bound is taken from the
paper's introduction as quoted in its MathSciNet entry and agrees with the
bound reported in the zbMath Open review; we have therefore not independently
examined the proof.}

Much of the subsequent work has concerned the surrounding distortion and
covering theory, or variants of the constant, rather than further numerical
improvements for the classical problem. Bonk, Minda, and
Yanagihara~\cite{BonkMindaYanagihara} proved sharp growth, distortion,
curvature, and covering theorems for normalized Bloch functions. Extensions
to several complex variables were studied by Chen and
Gauthier~\cite{ChenGauthierSeveral}, while Chen, Gauthier, and
Hengartner~\cite{ChenGauthierHengartner} obtained Bloch-type results for
planar harmonic mappings.

A different question is whether $B$ can in principle be determined to
arbitrary accuracy. Rettinger~\cite{Rettinger} answered this affirmatively by
proving that Bloch's constant is a computable real number and giving an
algorithm that approximates it to any prescribed precision. This addresses a
different, computability-theoretic question and does not give a new explicit
numerical bound.

The improvements of Bonk and of Chen--Gauthier follow the same idea. After
normalization, it is enough to study $f = F'$ in the class
\begin{equation}\label{eq:class}
  \Bcl = \bigl\{\, f \in \mathcal{H}(\D) : f(0) = 1,\quad
  (1-\abs{z}^2)\,\abs{f(z)} \le 1 \ \text{ for all } z \in \D \,\bigr\}.
\end{equation}
Put $r_0=1/\sqrt{3}$. This radius is natural here: Bonk's sharp distortion
theorem guarantees that $\Real f>0$ on $\abs{z}<r_0$ for every $f\in\Bcl$.
Consequently, $F$ is univalent on this disc.

For $0<r<1$, write
$R_r(\theta)=\int_0^r\Real f(te^{i\theta})\,dt$. Lower bounds for
$R_{r_0}$ control the distance from an interior point of $F(\D_{r_0})$ to its
boundary. The minimum of $R_{r_0}(\theta)$, after an optimal translation of
the target disc, therefore controls the size of a schlicht disc in $F(\D)$.
Bonk's theorem gives
$R_{r_0}(\theta) \ge \sqrt{3}/4$ for every~$\theta$, with equality only for one
extremal function, up to natural rotations. The improvements come from showing
that the deficit $R_{r_0}(\theta) - \sqrt{3}/4$ cannot be small simultaneously
in all directions. Chen and Gauthier control that deficit through an explicit
rational minorant depending on the Taylor coefficients~$a_2$ and~$a_3$ of~$f$.

The present paper replaces the rational minorant by the sharp object. Write
$f(z) = 1 + a_2 z^2 + a_3 z^3 + \cdots$ (the coefficient $a_1$ vanishes
automatically, see Lemma~\ref{lem:elem}), and for $0<r<1$ and
$(b_2,b_3) \in \R^2$ set
\begin{equation}\label{eq:Phi}
  \Phi_r(b_2,b_3) = \inf\Bigl\{ \int_0^r f(x)\,dx :
  f \in \Bcl,\ f \text{ has real coefficients},\ a_2 = b_2,\ a_3 = b_3 \Bigr\}.
\end{equation}
The point $(-1,0)$ will be of particular importance, although it is not the
minimum of $\Phi_{r_0}$ over the coefficient body: the Bonk extremal gives
$\Phi_{r_0}(-1,-8/(3\sqrt{3}))=\sqrt{3}/4$, which is the minimum.  The point
of $(-1,0)$ emerges after allowing an optimal translation of the target disc.
Numerical exploration identifies $a_2=-1$, $a_3=0$ as the worst coefficient
configuration for that problem.  There the resulting coefficient-level lower
bound is antipodally symmetric, so a translation cannot raise its smallest
value.  Thus $\Phi_{r_0}(-1,0)$ is the suspected bottleneck of the translated
fixed-radius argument, but the proof below does not assume this numerical
observation.

The function $\Phi_r$ can be viewed as the value function of an
infinite-dimensional linear program: the objective is linear in the Taylor
coefficients, and the constraint $(1-\abs{z}^2)\,\abs{f(z)} \le 1$ is the family
of linear inequalities $(1-\abs{z}^2)\,\Real\bigl(e^{-i\gamma} f(z)\bigr) \le
1$, $\gamma \in \R$. Every finite subfamily gives a relaxation, and every
non-negative combination of finitely many of those inequalities gives a rigorous
affine minorant of $\Phi_r$. This is the mechanism we exploit, and this
observation is what makes the result verifiable: the search for good multipliers
can be as arbitrary as we like, since the resulting inequality is checked
afterwards and stands on its own.

Our main result is the following.

\begin{theorem}\label{thm:main}
$B \ge \dfrac{\sqrt{3}}{4} + 0.0153$, so that
\[
  B \ge 0.44831270189 .
\]
\end{theorem}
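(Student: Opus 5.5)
We follow Bonk's reduction and sharpen only the quantitative step. Fix $F$ holomorphic on $\D$ with $F'(0)=1$. After the standard normalization (choose a point where $(1-\abs{z}^2)\abs{F'(z)}$ is maximal, or nearly so on a slightly smaller disc, compose with a disc automorphism and rescale), it suffices to treat $F$ with $f=F'\in\Bcl$. By Bonk's distortion theorem, $\Real f>0$ on $\abs{z}<r_0$, so $F$ is univalent on $\D_{r_0}$. For any point $w_0=F(z_0)$ with $\abs{z_0}<r_0$, the disc about $w_0$ of radius $\dist\bigl(w_0,F(\partial\D_{r_0})\bigr)$ is therefore schlicht.

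We estimate this distance from below by radial integrals. For each direction $\theta$, the boundary point $F(r_0e^{i\theta})$ lies at distance at least $\Real\bigl(e^{-i\theta}(F(r_0e^{i\theta})-w_0)\bigr)$ from $w_0$, and this equals $R_{r_0}(\theta)$ minus a correction linear in the translation $z_0$. Rotating so that $\theta=0$ and applying the symmetrization $f\mapsto \tfrac12\bigl(f(z)+\overline{f(\bar z)}\bigr)$, which preserves $\Bcl$ and the real part on the real axis, gives $R_{r_0}(\theta)\ge \Phi_{r_0}(b_2(\theta),b_3(\theta))$. Here $b_k(\theta)=\Real(a_ke^{ik\theta})$ up to the sign conventions of the rotation.

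The proof then has two computational ingredients. The first is an affine (or piecewise affine) minorant of $\Phi_{r_0}$. We truncate the linear program: choose finitely many nodes $z_j$ with $\abs{z_j}<1$ and phases $\gamma_j$, together with multipliers $\lambda_j\ge0$ and free multipliers $\mu_2,\mu_3$ for the coefficient constraints, such that
\[
\int_0^{r_0} f(x)\,dx-\mu_2a_2-\mu_3a_3+\sum_j\lambda_j\bigl(1-(1-\abs{z_j}^2)\Real(e^{-i\gamma_j}f(z_j))\bigr)
\]
is a constant $c$ plus a remainder whose tail coefficients are controlled. The only a priori bound available on tail coefficients is $\abs{a_n}\le$ Cauchy-type estimates from $(1-\abs z^2)\abs f\le1$, so the multipliers must make the first $N$ coefficients cancel exactly, and we bound the rest by $\sum_{n>N}\abs{\text{residual}_n}\cdot\sup_{f\in\Bcl}\abs{a_n}$. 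This yields $\Phi_{r_0}(b_2,b_3)\ge c+\mu_2b_2+\mu_3b_3$, valid on a cell of the coefficient body. We cover the body by finitely many cells, each with its own certificate, and check every inequality in interval arithmetic.

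The second ingredient is the optimization over directions and the translation. The coefficient point $(a_2,a_3)$ satisfies $\abs{a_2}\le1$ and the known sharp region. For each complex pair we obtain $R_{r_0}(\theta)\ge L(\theta):=\min_{\text{cells}}(\dots)$, a trigonometric-type expression in $\theta$. We must then show that some $z_0$ (taken of the form $z_0=te^{i\phi}$ along a suitable direction) makes $\min_\theta$ of the corrected bound at least $\sqrt3/4+0.0153$. The case near $(a_2,a_3)=(-1,0)$, where the bound is antipodally symmetric and translation does not help, is exactly where $\Phi_{r_0}(-1,0)$ must itself exceed $\sqrt3/4+0.0153$; a dedicated sharp certificate is needed there. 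Away from that point, a branch-and-bound over $(a_2,a_3,\theta)$ boxes with an explicit translation per box completes the argument.

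\textbf{Main obstacle.} Producing certificates at $(-1,0)$ that are tight enough, while rigorously controlling the infinite tail and the Bonk-normalization error. We would first solve large discretized LPs in floating point to find the multipliers, then round them to rationals and verify the resulting inequalities a posteriori with interval arithmetic, losing a little margin in the rounding.
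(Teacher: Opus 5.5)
There is a genuine gap: your argument works throughout at the single radius $r_0$, and no argument of that form can reach $0.0153$. Whatever minorants of $\Phi_{r_0}$ you certify and whatever centre $w_0$ you pick, the bound you obtain for $\beta(F)$ is at most $\min_\theta\bigl[R_{r_0}(\theta)-\Real(w_0e^{-i\theta})\bigr]$. Averaging the directions $\theta=0$ and $\theta=\pi$, where the translation terms cancel, shows this is at most $\tfrac12\bigl(R_{r_0}(0)+R_{r_0}(\pi)\bigr)=\int_0^{r_0}\Real\,\tfrac12\bigl(f(t)+f(-t)\bigr)\,dt$, independently of $w_0$. For an even $g\in\Bcl$ with real coefficients this is simply $\int_0^{r_0}g$. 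Section~\ref{sec:limits} exhibits an explicit even polynomial $g\in\Bcl$ of degree $44$, verified in Arb, with $\int_0^{r_0}g-\sqrt{3}/4<0.0152$. So for $F(z)=\int_0^z g$ your method certifies strictly less than the theorem asserts, however sharp the certificates and however fine the branch-and-bound. This is exactly where your plan breaks. You correctly observe that translation cannot help at $(a_2,a_3)=(-1,0)$, and then require a certificate giving $\Phi_{r_0}(-1,0)\ge\sqrt{3}/4+0.0153$. The best certified value there is $\sqrt{3}/4+0.0150346$, and primal computations suggest the true gain is only about $0.01507$. More decisively, the even witness shows rigorously that the fixed-radius reduction as a whole is capped below $0.0152$.

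The missing idea is to let the radius depend on the coefficient regime; the paper splits on $\abs{a_3}$. When $\abs{a_3}\le\eta=0.7$, a regime containing $(-1,0)$ and every even function, point-value certificates (the same linear program with objective weights $w_k=x^k$) show $f(x)\ge0.000936$ for $r_0\le x\le r_1\approx0.5815$. This holds for every real-coefficient $f\in\Bcl$ with $\abs{b_2}\le1$, $\abs{b_3}\le\eta$. Applied to the symmetrisations $f_\theta$ and combined with Bonk's theorem, this gives $\Real f>0$ on $\D_{r_1}$, hence univalence there. Lemma~\ref{lem:disc} can then be used with the longer integral $R_{r_1}$, for which the antipodal cap above is higher. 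A single affine minorant of $\Phi_{r_1}$ on that box, Sion duality in the centre, and the moment inequality $\abs{\nu_3}\le1-\abs{\nu_2}^2$ give the gain $0.015304$. Only for $\abs{a_3}\ge\eta$, where the odd part is large enough for translation to help, does the paper stay at $r_0$; there it uses an envelope of $47$ minorants and a branch-and-bound over three-atom balanced measures. Your cell-wise minorants with an explicit translation per box might serve for that branch, but without the enlarged radius in the small-$a_3$ branch the proof cannot close. Two minor points. First, exact cancellation of the first $N$ residual coefficients is unnecessary, since $\sum_{k=4}^K B_k\abs{d_k}$ can simply be subtracted as in Theorem~\ref{thm:cert}. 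Second, no sharp description of the $(a_2,a_3)$ region is known, so you would have to work, as the paper does, with the certified bound $\abs{a_3}\le3.2888$ and outer polyhedral relaxations.
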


The theorem reduces the gap to the conjectured Ahlfors--Grunsky value from
$0.0385$ to $0.0235$. For comparison, the single-minorant version of the argument gives the
following compact intermediate result.

\begin{proposition}\label{prop:pooled}
  $B \ge \dfrac{\sqrt{3}}{4}+0.0114402996202=0.44445300151\ldots$.
\end{proposition}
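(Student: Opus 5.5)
We prove Proposition~\ref{prop:pooled} along the lines of Bonk and Chen--Gauthier, with the rational minorant replaced by a single certified affine minorant of $\Phi_{r_0}$.

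\emph{Step 1: reduction to one direction.} Let $F$ be holomorphic on $\D$ with $F'(0)=1$. By the usual normalization (choose the point where $(1-\abs{z}^2)\abs{F'(z)}$ is maximal and compose with a disc automorphism), we may assume $f=F'\in\Bcl$. By Lemma~\ref{lem:elem}, $f=1+a_2z^2+a_3z^3+\cdots$.

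Since $\Real f>0$ on $\abs{z}<r_0$, the map $F$ is univalent on $\D_{r_0}$. Hence for any $w$ near $0$, the distance from $w$ to $\partial F(\D_{r_0})$ is at least
\[
\min_\theta \bigl(R_{r_0}(\theta)-\Real(e^{-i\theta}w)\bigr).
\]
Fix $\theta$ and set $g(z)=\tfrac12\bigl(f(e^{i\theta}z)+\overline{f(e^{i\theta}\bar z)}\bigr)$. This $g$ has real coefficients, lies in $\Bcl$ because the class is convex and invariant under conjugation, and satisfies $\int_0^{r_0}g=R_{r_0}(\theta)$. Its coefficients are $b_k(\theta)=\Real(a_ke^{ik\theta})$, so
\[
R_{r_0}(\theta)\ge\Phi_{r_0}\bigl(\Real(a_2e^{2i\theta}),\Real(a_3e^{3i\theta})\bigr).
\]

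\emph{Step 2: the affine minorant.} Take finitely many points $z_j$ with $\abs{z_j}<1$, phases $\gamma_j$, and weights $\lambda_j\ge0$. Combining the constraints $(1-\abs{z_j}^2)\Real(e^{-i\gamma_j}g(z_j))\le1$ gives an inequality
\[
\int_0^{r_0}g \ge c_0+c_2b_2+c_3b_3,
\]
valid provided the residual $\sum_{k\ge4}b_kx_k$ is controlled. To control it, keep the Taylor coefficients $b_k$ for $k\ge4$ as unknowns bounded by the Cauchy-type estimate $\abs{b_k}\le C_k$ that follows from $(1-\abs z^2)\abs g\le1$. Choose the multipliers so that the coefficient of each $b_k$, $k\ge4$, in $\int_0^{r_0}g-\sum\lambda_j(\cdots)$ is small up to a truncation order $N$. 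Absorb the remaining terms into $c_0$ through the tail bound $\sum_{k>N}\abs{\text{coef}_k}C_k$.

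The multipliers are found numerically by an LP centred at $(b_2,b_3)=(-1,0)$. The resulting inequality is then verified in interval arithmetic. This yields a rigorous bound $\Phi_{r_0}(b_2,b_3)\ge L(b_2,b_3)=c_0+c_2b_2+c_3b_3$ on the coefficient body.

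\emph{Step 3: pooling over directions.} From Steps 1 and 2,
\[
R_{r_0}(\theta)\ge c_0+c_2\Real(a_2e^{2i\theta})+c_3\Real(a_3e^{3i\theta}).
\]
The right side is a trigonometric polynomial with frequencies $2$ and $3$, and its mean over $\theta$ is $c_0$. The translation $w$ can only help, so it suffices to lower-bound $\min_\theta$ of this expression minus $\Real(e^{-i\theta}w)$, optimised over $w$. The frequency-$1$ translation cannot cancel the frequency-$2$ and $3$ oscillations. The bound is therefore $c_0$ minus the unavoidable dip, which is a function of $(\abs{a_2},\abs{a_3})$ over the compact coefficient body. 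That body is bounded by Bonk's coefficient estimates (for instance $\abs{a_2}\le1$).

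Alternatively, one uses a mixture: the bound $\sqrt3/4$ from Bonk's theorem in directions where $L$ is weak, and $L$ elsewhere. One then minimises over $(\abs{a_2},\abs{a_3},\arg)$ by interval branch-and-bound. The certified minimum gives $\sqrt3/4+0.0114402996\ldots$.

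\emph{Main obstacle.} The hard step is Step 2: producing an affine minorant whose worst case, after the pooling of Step 3, still beats $\sqrt3/4$ by the stated margin. This requires a uniform rigorous tail bound for the infinitely many coefficients. I would first try dense sample points $z_j$ near the circle $\abs{z}=r_0$ with truncation order $N\approx 40$. I would then inflate $c_0$ downward by the certified tail and interval round-off, and check that the final branch-and-bound still clears $0.01144$.
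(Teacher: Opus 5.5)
Your Steps~1 and~2 match the paper (Lemma~\ref{lem:sym} and Theorem~\ref{thm:cert}), but Step~3, where the constant is actually produced, has a genuine gap.

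All of the gain comes from the optimal translation. At $c=0$ the minorant does not even reach $\sqrt{3}/4$: at the Bonk extremal it equals $0.4310$ at $\theta=0$, and over the box $\abs{b_2}\le1$, $\abs{b_3}\le C$ it only guarantees $e_0-\abs{d_2}-C\abs{d_3}\approx0.4149$. So your heuristic that a frequency-one shift ``cannot cancel'' the oscillations points the wrong way. Your ``$c_0$ minus the unavoidable dip'' merely names $\sup_w\min_\theta$ and gives no way to bound it. The paper bounds it with two ideas you are missing:
\begin{itemize}
\item Sion's minimax theorem converts $\sup_c\min_\theta[R_{r_0}(\theta)-\Real(ce^{-i\theta})]$ into $\min\{\int R_{r_0}\,d\mu:\nu_1(\mu)=0\}$. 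Integrating the affine minorant then gives $e_0+d_2m_2+d_3m_3$ with $m_k=\Real(a_k\nu_k)$.
\item The moment inequality $\abs{\nu_3}\le1-\abs{\nu_2}^2$ for balanced measures (Theorem~\ref{thm:moment}) prevents $m_2$ and $m_3$ from being simultaneously extreme. This reduces everything to minimising a convex quadratic in $m_2$ in closed form.
\end{itemize}
A primal branch-and-bound with an explicit centre for each coefficient box could in principle replace this, but you do not describe one.

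Second, the constant hinges on the certified bound $\abs{a_3}\le C=3.2888$ (Theorem~\ref{thm:a3}, obtained from a separate certificate via Theorem~\ref{thm:a3cert}). You only invoke $\abs{a_2}\le1$ and ``Bonk's coefficient estimates''. The worst case is $e_0-\abs{d_3}C-d_2^2/(4\abs{d_3}C)$, attained at the interior vertex. With the paper's coefficients, Bonk's $\abs{a_3}\le5$ would give only about $\sqrt{3}/4+0.003$, and the Cauchy bound $5.379$ about $\sqrt{3}/4+0.0005$.

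For the same reason, tuning the LP at $(-1,0)$ is the wrong objective. It leaves $d_3$ uncontrolled, yet $d_3$ enters through $\abs{d_3}C$, and the binding configuration is $m_2\approx-0.84$, $m_3=-C(1-m_2^2)$. The paper instead chooses multipliers to maximise \eqref{eq:regionmin}.

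Finally, before invoking Lemma~\ref{lem:disc} you must check that the optimal centre satisfies $\abs{c}<\min_\theta R_{r_0}(\theta)$. The paper does this with $R_{r_0}\le\operatorname{artanh}(r_0)$ and the crude bound $0.4149$.
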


The distinction between Proposition~\ref{prop:pooled} and the main theorem
lies partly in the choice of radius. Proposition~\ref{prop:pooled} uses only
the universal radius $r_0$. For the main theorem we split according to the
size of $a_3$. When $\abs{a_3}\le\eta$, point-value certificates extend the
positivity of $\Real f$, and hence the univalence of $F$, to a slightly larger
radius $r_1>r_0$; we may then use the longer integral $R_{r_1}$. When
$\abs{a_3}\ge\eta$, we retain the universal radius $r_0$ and instead use a
finite envelope of affine minorants. Thus the radius is chosen according to
the coefficient regime. The values of $r_1$ and $\eta$ are given in
Section~\ref{sec:variable}.

We also record two ingredients used below.

\begin{theorem}\label{thm:a3}
  If $f \in \Bcl$ then $\abs{a_3} \le 3.2888$.
\end{theorem}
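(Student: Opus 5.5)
We plan to prove Theorem~\ref{thm:a3} with a \emph{dual certificate} in the linear-programming framework of the introduction. We do not try to extremize over $\Bcl$. The class $\Bcl$ is invariant under rotations $f(z)\mapsto f(\omega z)$, $\abs{\omega}=1$, and such a rotation multiplies $a_3$ by $\omega^3$. So we may assume $a_3\ge 0$, and it suffices to show $\Real a_3\le 3.2888$ for every $f\in\Bcl$.

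The basic tool is averaging over circles against nonnegative trigonometric densities. Fix radii $0<r_1<\dots<r_m<1$. For each $j$ fix a real trigonometric polynomial $p_j(t)=\sum_{\abs{k}\le N}c_{j,k}e^{ikt}\ge 0$. From $(1-r_j^2)\Real f(r_je^{it})\le 1$ we get
\[
\frac{1}{2\pi}\int_0^{2\pi}\Real f(r_je^{it})\,p_j(t)\,dt\le \frac{\hat p_j(0)}{1-r_j^2}.
\]
Expanding $f=1+a_2z^2+a_3z^3+\cdots$ (using $a_1=0$ from Lemma~\ref{lem:elem}), the left side is an exact finite linear form:
\[
\hat p_j(0)+\sum_{k=2}^{N}r_j^k\,\Real\bigl(a_k\,\overline{\hat p_j(k)}\bigr),
\]
because every Taylor coefficient of degree $>N$ integrates to zero against $p_j$. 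This is the point of using polynomial densities rather than point evaluations: the infinite tail cancels exactly, so no truncation estimate is needed. We also allow rotated constraints $\Real(e^{-i\gamma}f)$. These only modify the phases of the $\hat p_j(k)$.

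We then sum over $j$ with nonnegative weights and try to arrange that the combined coefficient of $\Real a_3$ is $1$, while the coefficients of $a_2,a_4,\dots,a_N$ vanish. The result would be
\[
\Real a_3\le \sum_j w_j\Bigl(\frac{\hat p_j(0)}{1-r_j^2}-\hat p_j(0)\Bigr).
\]
A single circle with density $1+\cos 3t$ gives only $3\sqrt3\approx 5.196$ at $r=1/\sqrt3$. So genuinely several radii and higher-degree densities are needed, with the $a_2,a_4,\dots$ terms cancelled across circles (the $r_j^k$ factors differ between circles, which gives the room). If exact cancellation of $a_2$ turns out to be too costly, we would leave a residual multiple of $a_2$ and bound it separately, e.g.\ via $\abs{a_2}\le 1$ if Lemma~\ref{lem:elem} supplies it.

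The search for $(r_j,p_j,w_j)$ is done numerically by discretizing: impose nonnegativity of each $p_j$ via a sum-of-squares (Fejér--Riesz) parametrization $p_j=\abs{q_j}^2$, and solve the resulting LP/SDP for the smallest right-hand side. Afterwards we round the data to rationals. We then certify with interval arithmetic that (i) each $p_j\ge0$, which is automatic if written as $\abs{q_j}^2$ with rational $q_j$; (ii) the linear cancellation identities hold, exactly in rational arithmetic or with a tiny residual absorbed into the bound; (iii) the final sum is $\le 3.2888$.

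The main obstacle will be the numerical step: finding certificates strong enough to push from the naive $\approx5.2$ down to $3.2888$ while keeping exact cancellation of all the other coefficients. The constraint used is only the real-part half-plane version of $\abs{f}\le(1-\abs z^2)^{-1}$, so I expect that many radii close to $1$ and a fairly high degree $N$ are needed. If the real-part relaxation alone stalls above $3.2888$, my first fallback is to use many rotation angles $\gamma$ at each radius, which recovers the full modulus constraint in the limit.
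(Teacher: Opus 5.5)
Your framework is sound. It is a genuinely different certificate class from the paper's, though it rests on the same duality idea.

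\textbf{The paper's route.} The paper uses Theorem~\ref{thm:a3cert} with the data of Proposition~\ref{prop:certs}. It first reduces to real coefficients via the symmetrisation \eqref{eq:sym} and $f(z)\mapsto f(-z)$. It then combines point-evaluation rows $\Real\bigl(e^{-i\gamma_j}f(\rho_je^{i\psi_j})\bigr)\le(1-\rho_j^2)^{-1}$, found by an LP with $\gamma_j=\arg f^\star(\rho_je^{i\psi_j})$ for a candidate optimiser $f^\star$. It never cancels the other moments. Instead it bounds them afterwards by $\abs{a_2}\le1$, the Cauchy bounds $\abs{a_k}\le B_k$ for $4\le k\le K=320$, and an explicit geometric tail $T$. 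This gives $3.28877762819$, which is then rounded up.

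\textbf{Your route.} You average each constraint over a circle against a nonnegative trigonometric polynomial. The tail then vanishes identically, and positivity is certified algebraically via $p_j=\abs{q_j}^2$. The price is an SDP search plus exact cancellation of $a_4,\dots,a_N$. After rounding you will absorb the residuals with exactly the bounds $\abs{a_k}\le B_k$ of Lemma~\ref{lem:elem}(iii), so in practice the two schemes meet.

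\textbf{Your class is complete in the limit.} The best bound it can certify equals the value of the relaxation \emph{maximise $\Real a_3$ over polynomials $P=1+\sum_{k=2}^{N}a_kz^k$ satisfying all your averaged circle constraints}.
\begin{itemize}
\item This relaxation has the Slater point $P\equiv1$, so there is no duality gap.
\item It contains the truncation $S_Nf$ of every $f\in\Bcl$, since densities of degree $\le N$ see only $S_Nf$.
\item Smoothing by a Fej\'er or Jackson kernel, followed by a normal-families argument, shows its value tends to the true maximum ($\approx3.284$) as $N\to\infty$ and the radii and rotation angles become dense.
\end{itemize}
So certificates below $3.2888$ do exist in your class.

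\textbf{Practical cautions.}
\begin{itemize}
\item $3.2888$ is not a loose target. You must come within about $0.005$ of the numerical optimum, which means essentially matching the paper's tuned certificate.
\item Build the rotations $\gamma$ in from the start rather than keeping them as a fallback. With $\gamma=0$ alone you certify only the real-part problem, whose value is at least the true maximum.
\item With rotations, the $k=0$ term becomes $\cos\gamma_j\,\hat p_j(0)$, so your final displayed bound needs that factor.
\end{itemize}
As with the paper, your argument becomes a proof only once such a certificate is actually produced and checked.
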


Chen and Gauthier use $\abs{a_3} \le 4.2$; Bonk uses $\abs{a_3} \le 5$ and the
estimate that follows directly from Cauchy's inequality is $\abs{a_3} \le
5.3791$. Numerically, the optimal bound for $\abs{a_3}$ appears to be close to
$3.284$ but this is not used by any of the arguments in the paper.

A second ingredient is the classical moment bound
$\abs{\nu_3}\le 1-\abs{\nu_2}^2$ for a probability measure with $\nu_1=0$.
It prevents the second and third moments from being simultaneously large; see
Section~\ref{sec:moments}.

The paper is organised as follows. Section~\ref{sec:norm} recalls the
normalisation and the passage from radial integrals to schlicht discs.
Section~\ref{sec:cert} sets up the linear program and derives the certificate
inequality underlying the computations. Section~\ref{sec:numbers} states the
fixed-radius certificates used in the compact argument.
Section~\ref{sec:moments} states and proves the moment inequality.
Section~\ref{sec:proof} proves the compact Proposition~\ref{prop:pooled}, and
Section~\ref{sec:variable} gives the coefficient-dependent-radius proof of
Theorem~\ref{thm:main}. Section~\ref{sec:impl} describes the computation and its
verification, and Section~\ref{sec:limits} records the remaining numerical gap.

\subsection*{Certified numerics}
All floating-point computation in this paper is search; none of it is trusted.
Every inequality asserted below is re-derived in Arb ball
arithmetic~\cite{Arb} from the stored certificate data, by the programs
\texttt{certify\_bloch.py} and \texttt{variable\_radius\_certificate.py}
accompanying this paper. The role of floating-point computation is only to
\emph{find} multipliers and midpoint dual solutions; interval arithmetic
allows only their rigorously enclosed consequences into the proof.

\subsection*{Tool disclosure}
OpenAI Codex (GPT-5.6 Sol) as well as Anthropic Claude Code (Opus 5.0), both accessed
August 2026, were used as interactive research assistants for mathematical
brainstorming and proof auditing.

The models assisted with numerical exploration as well as with developing the
interval-arithmetic verification scripts. The author has reviewed all generated
material, independently checked the arguments and computations, and takes full
responsibility for the content of the paper.

\section{Normalisation, and discs in the image}\label{sec:norm}

We use the standard reduction, as observed by Landau~\cite{Landau}: in
computing~$B$ it suffices to consider~$F$ with
\begin{equation}\label{eq:normalisation}
  F(0) = 0, \qquad F'(0) = 1, \qquad
  \sup_{z \in \D} (1-\abs{z}^2)\, \abs{F'(z)} \le 1 .
\end{equation}
Then $f := F'$ lies in the class $\Bcl$
of \eqref{eq:class}, and, since $\abs{f(0)} = 1$, the supremum in
\eqref{eq:normalisation} is attained at the origin.

For convenience of the reader, we briefly recall the reduction: Given a map with
$F'(0)=1$, choose $z_n$ so that $A_n=(1-\abs{z_n}^2)\abs{F'(z_n)}$ approaches
the supremum of the hyperbolic derivative, precompose with a disc automorphism
$\phi_n$ satisfying $\phi_n(0)=z_n$, and divide $F\circ\phi_n-F(z_n)$ by
$F'(z_n)\phi_n'(0)$. The resulting maps have derivative one at the origin, their
hyperbolic-derivative norms tend to one, and their schlicht-disc radii equal
$\beta(F)/A_n$. The usual normal-family limit therefore gives
\eqref{eq:normalisation} without increasing the quantity whose infimum
defines~$B$; see~\cite{Landau} for complete details.

\begin{lemma}\label{lem:elem}
Let $f(z) = \sum_{k \ge 0} a_k z^k \in \Bcl$. Then
\begin{enumerate}
\item[(i)] $a_0 = 1$ and $a_1 = 0$;
\item[(ii)] $\abs{a_2} \le 1$;
\item[(iii)] $\abs{a_k} \le B_k := \bigl(\frac{k+2}{k}\bigr)^{k/2}\,\frac{k+2}{2}$
for every $k \ge 1$, and $B_k \le \frac{e}{2}(k+2)$.
\end{enumerate}
\end{lemma}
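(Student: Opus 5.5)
For (i), $a_0=f(0)=1$ is part of the definition of $\Bcl$. To get $a_1=0$ I will use that $u(z)=(1-\abs{z}^2)\abs{f(z)}$ attains its maximum value $1$ at the origin. Near $0$ we have
\[
\log u(z)=\log(1-\abs{z}^2)+\log\abs{f(z)}=\Real(a_1 z)+O(\abs{z}^2),
\]
because $\log\abs{f}=\Real\log f$ and $\log f(z)=a_1z+O(z^2)$. If $a_1\neq 0$, take $z=t\overline{a_1}/\abs{a_1}$ with small $t>0$. Then $\log u(z)=\abs{a_1}t+O(t^2)>0$, which contradicts $u\le 1$. Hence $a_1=0$.

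For (ii) I go to second order. With $a_1=0$ we have $\log f(z)=a_2z^2+O(z^3)$ and $\log(1-\abs{z}^2)=-\abs{z}^2+O(\abs{z}^4)$, so
\[
\log u(z)=-\abs{z}^2+\Real(a_2z^2)+O(\abs{z}^3).
\]
Choose $z=te^{i\phi}$ with $\phi$ such that $a_2e^{2i\phi}=\abs{a_2}$. Then $\log u=(\abs{a_2}-1)t^2+O(t^3)\le 0$ for all small $t$, which forces $\abs{a_2}\le1$. This second-order step is the delicate one: the maximum at $0$ only gives a non-strict inequality, and the cubic error term must be handled correctly as $t\to 0$.

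For (iii) I use Cauchy's estimate on the circle $\abs{z}=r$. There $\abs{f}\le 1/(1-r^2)$, so
\[
\abs{a_k}\le \frac{1}{r^k(1-r^2)}.
\]
Minimizing $r^k(1-r^2)$ in $r$ gives $r^2=k/(k+2)$, and the minimum value is $(k/(k+2))^{k/2}\cdot 2/(k+2)$. Its reciprocal is exactly $B_k$.

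For the last claim, write
\[
\Bigl(\tfrac{k+2}{k}\Bigr)^{k/2}=\Bigl(1+\tfrac2k\Bigr)^{k/2}=\exp\Bigl(\tfrac k2\log\bigl(1+\tfrac2k\bigr)\Bigr)\le \exp\Bigl(\tfrac k2\cdot\tfrac2k\Bigr)=e,
\]
using $\log(1+x)\le x$. Therefore $B_k\le e(k+2)/2$.
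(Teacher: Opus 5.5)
Your proof is correct and follows the paper's argument essentially step for step: for (i)--(ii) you expand $u(z)=(1-\abs{z}^2)\abs{f(z)}$ to first and second order at its maximum $u(0)=1$, and for (iii) you apply Cauchy's estimate at $\rho^2=k/(k+2)$. The differences are cosmetic: you expand $\log u$ and pick the optimal direction instead of expanding $u$ over all directions, and you use $\log(1+x)\le x$ in place of the paper's appeal to $(1+2/k)^{k/2}\uparrow e$. One small wording slip: at $r^2=k/(k+2)$ the quantity $r^k(1-r^2)$ is \emph{maximised}, not minimised; this is harmless, since Cauchy's bound holds for every $r\in(0,1)$.
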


\begin{proof}
(i) and (ii) By the definition of $\Bcl$, $a_0=1$. The function
$u(z)=(1-\abs{z}^2)\abs{f(z)}$ satisfies $u\le1=u(0)$. For small $\abs{z}$,
\[
  u(z)=1+\Real(a_1z)+O(\abs{z}^2).
\]
Taking $z=re^{i\varphi}$ and letting $r\downarrow0$ shows that
$\Real(a_1e^{i\varphi})\le0$ for every $\varphi$, and hence $a_1=0$.
Consequently,
\[
  u(z)
  =(1-\abs{z}^2)\abs[\big]{1+a_2z^2+O(\abs{z}^3)}
  =1+\Real(a_2z^2)-\abs{z}^2+O(\abs{z}^3).
\]
Taking again $z=re^{i\varphi}$ gives
$\Real(a_2e^{2i\varphi})\le1+O(r)$ for every $\varphi$, and therefore
$\abs{a_2}\le1$.

(iii) By Cauchy's estimate on $\abs{z} = \rho$ and the constraint,
\[
  \abs{a_k} \le \rho^{-k}\max_{\abs{z}=\rho} \abs{f(z)} \le \frac{1}{\rho^{k}(1-\rho^2)}.
\]
 The right hand side is minimised at $\rho^2 = k/(k+2)$, giving the formula
for~$B_k$. Since $(1+2/k)^{k/2}$ increases to~$e$, it follows that $B_k \le
\frac{e}{2}(k+2)$.
\end{proof}

We will make good use of the following result by Bonk~\cite{Bonk}:

\begin{theorem}\label{thm:bonk}
If $f \in \Bcl$ then $\Real f(z) > 0$ for $\abs{z} < r_0$.
\end{theorem}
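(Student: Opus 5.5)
The statement is Bonk's distortion theorem, which the paper takes from~\cite{Bonk}. A self-contained proof would run as follows. Normalise so that the hyperbolic derivative is maximal at the origin, and combine this with the Schwarz--Pick lemma to control $f'/f$. Then integrate that control along radii to get a lower bound for $\Real f$.

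\emph{First step: a Schwarz--Pick type bound.} Fix $z\in\D$ and consider $g=f\circ\phi_z\cdot\phi_z'$ with $\phi_z$ the disc automorphism sending $0$ to $z$. Then $(1-\abs{w}^2)\abs{g(w)}$ is still bounded by $1$ and equals $(1-\abs{z}^2)\abs{f(z)}$ at $w=0$. The quantity $v(w)=\log\bigl((1-\abs{w}^2)\abs{f(w)}\bigr)$ is then subharmonic plus the concave term $\log(1-\abs w^2)$. The key consequence to extract is the sharp estimate
\[
  \Bigl|\frac{f'(z)}{f(z)}\cdot(1-\abs{z}^2) - 2\bar z\Bigr| \le 2\sqrt{1-(1-\abs z^2)\abs{f(z)}}\, ,
\]
or an equivalent differential inequality. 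I would derive it by applying Lemma~\ref{lem:elem}(ii) to the renormalised function $g/g(0)$, which is in $\Bcl$ after scaling. More precisely, one uses the refinement of (ii) valid when $\abs{g(0)}=A<1$: a function with $(1-\abs w^2)\abs{g}\le1$ and $\abs{g(0)}=A$ satisfies $\abs{g'(0)}\le 2A\sqrt{1-A}$-type bounds. That refinement comes from the same second-order expansion of $u$, applied to the extremal problem in one complex dimension.

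\emph{Second step: integrate along a ray.} Write $z=te^{i\theta}$ and study $h(t)=\log f(te^{i\theta})$ with $h(0)=0$. The bound from the first step controls $\abs{h'(t)}$ and, more importantly, $\abs{\operatorname{Im} h'(t)}$ in terms of $\abs f$. Comparison with the ODE solved by the Bonk extremal function $f_*(z)=\dfrac{1-\sqrt3 z}{(1-z/\sqrt3)^3}$ shows that $\abs{\arg f(te^{i\theta})}<\pi/2$ as long as $t<1/\sqrt3$. At $t=1/\sqrt3$, the extremal $f_*$ has its zero on the real axis, precisely at $z=1/\sqrt3$. Hence $\Real f>0$ on $\abs z<r_0$.

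\emph{Main obstacle.} The hard part is the comparison argument: the differential inequality involves both $\abs f$ and $\arg f$, so one cannot simply integrate. I would first try the route via $\abs{f}$. Show the sharp lower bound $\abs{f(z)}\ge\abs{f_*(-\abs z)}$-type distortion, with $\abs{f(z)}$ bounded below by the extremal's value on the ray toward its zero. Then use the argument bound $\abs{\arg f}\le$ the integral of the corresponding quantity along the ray, and check that the extremal saturates both estimates simultaneously. Strictness follows because equality throughout forces $f$ to be a rotation of $f_*$, and even then $\Real f_*>0$ on the open disc $\abs z<1/\sqrt3$.
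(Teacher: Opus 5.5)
The paper does not prove this statement; it is quoted from Bonk's paper. Your sketch therefore has to stand on its own, and it does not.

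\textbf{The key inequality of your first step is false.} Take $f(z)=1-z^2$. It lies in $\Bcl$, since $(1-\abs{z}^2)\abs{1-z^2}\le 1-\abs{z}^4$. At real $z=x\in(0,1)$ your left-hand side equals $4x$, while your right-hand side is $2x\sqrt{2-x^2}<4x$.

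There is also a structural reason it must fail. The inequality gives $\abs{f'/f}\le 2/(1-\abs{z})$ off the zero set. Then $\log\abs{f}$ could not tend to $-\infty$ inside $\D$, so no member of $\Bcl$ could vanish. Yet the extremal $f_*$ vanishes at $r_0$, and that zero is exactly what the theorem is about.

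In terms of your recentred $g$ and $A=\abs{g(0)}$, a correct bound cannot force $\abs{g'(0)}\to0$ as $A\to0$. Indeed $\zeta\mapsto\tfrac{3\sqrt3}{2}\zeta$ is admissible with $A=0$. In fact $f_*=-\tfrac{3\sqrt3}{2}\,\phi\,\phi'$ with $\phi(z)=(z-r_0)/(1-r_0z)$, so $f_*$ is a recentring of this linear function.

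Your derivation could not produce such a bound either. First, $g/g(0)$ is not in $\Bcl$: its weighted supremum is $1/A>1$. Second, the second-order expansion behind Lemma~\ref{lem:elem} constrains derivatives only at a point where $u$ attains its maximum $1$. At a point with $u=A<1$ it says nothing, so any estimate of $\abs{g'(0)}$ must use the global constraint.

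\textbf{The second step, where the theorem actually lives, is only announced.} Your proposed split into a lower bound for $\abs f$ and an upper bound for $\abs{\arg f}$ does not match the extremal. On the ray to its zero, $f_*$ is positive, so it is the modulus that degenerates, not the argument. As written, $\abs{f(z)}\ge\abs{f_*(-\abs z)}$ is even false, since $f_*(1/2)\approx0.37<f_*(-1/2)\approx0.87$.

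No argument estimate is needed. By Lemma~\ref{lem:sym}, $\Real f(te^{i\theta})=f_\theta(t)$ with $f_\theta\in\Bcl$ having real coefficients. So it suffices to show that a real-coefficient $h\in\Bcl$ has no zero on $[0,r_0)$.

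For such $h$, put $A(t)=(1-t^2)h(t)$ and let $G_t$ be your recentred function built from $h$ at $t$. Then $(1-t^2)A'(t)=G_t'(0)$. Hence a local bound $\abs{G'(0)}\le\beta(\abs{G(0)})$ integrates to $\int_{A(t)}^{1}\beta(a)^{-1}\,da\le\operatorname{artanh}t$.

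The recentrings of $f_*$ along $[0,r_0]$ trace the curve $(A,\beta_*)=\tfrac{3\sqrt3}{2}(1-x^2)\bigl(x,\,1-3x^2\bigr)$, $0\le x\le r_0$. For this curve $\int_0^1\beta_*(a)^{-1}\,da=\operatorname{artanh}r_0$ exactly, because the substitution produces $dx/(1-x^2)$. Every valid $\beta$ lies on or above $\beta_*$, since these recentrings are admissible. So this route reaches $r_0$ only if one proves that these recentrings are extremal for the local problem of maximising $\abs{G'(0)}$ given $\abs{G(0)}$.

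That sharp extremal lemma is the missing idea, and nothing in your sketch supplies it.
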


We use Theorem~\ref{thm:bonk} only for positivity, and hence univalence, up to
$r_0$; in the near branch, i.e.\ for small values of $a_3$, of
Section~\ref{sec:variable}, certificates extend that positivity to a slightly
larger radius. The quantitative part of Bonk's distortion theorem is not used
directly; all quantitative lower bounds needed below are supplied by the
certificates.

\begin{lemma}\label{lem:disc}
Let $F$ satisfy \eqref{eq:normalisation}, let $0<r<1$ be such that
$\Real f>0$ on $\D_r$, and let $c \in \C$. Put
\[
  R_r(\theta) = \int_0^r \Real f\bigl(te^{i\theta}\bigr)\,dt
  \quad\text{and}\quad
  \varrho_r(c) = \min_{\theta} \Bigl[\, R_r(\theta)
  - \Real\bigl(c e^{-i\theta}\bigr) \Bigr].
\]
If $\abs{c} < \min_\theta R_r(\theta)$, then $F(\D_r)$ contains the open disc
with centre $c$ and radius $\varrho_r(c)$, and $F$ is univalent on $\D_r$. In
particular, $\beta(F) \ge \varrho_r(c)$.
\end{lemma}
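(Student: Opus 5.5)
We prove Lemma~\ref{lem:disc} in two steps: first univalence of $F$ on $\D_r$, then the covering statement, which comes from estimating $\abs{F(z)-c}$ on circles $\abs{z}=s$ close to $r$.

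\emph{Univalence.} $\D_r$ is convex and $\Real F'=\Real f>0$ there. For distinct $z_1,z_2\in\D_r$ we have
\[
  F(z_2)-F(z_1)=(z_2-z_1)\int_0^1 f\bigl(z_1+t(z_2-z_1)\bigr)\,dt .
\]
The integral has positive real part, so it is nonzero, and hence $F(z_1)\neq F(z_2)$. This is the Noshiro--Warschawski argument.

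\emph{Covering.} Fix $w$ with $\abs{w-c}<\varrho_r(c)$. Since $F$ is univalent on $\D_r$, the argument principle shows that $w\in F(\D_r)$ as soon as $\abs{F(z)-c}>\abs{w-c}$ on some circle $\abs{z}=s<r$. We also need $c\in F(\D_s)$, so that $F-c$ has a zero inside and the winding number of $F-c$ around $\abs{z}=s$ equals $1$. Then Rouché's theorem, applied to $F-w=(F-c)+(c-w)$, gives one zero of $F-w$ in $\D_s$.

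For $z=se^{i\theta}$ we bound
\[
  \abs{F(z)-c}\ge \Real\bigl(e^{-i\theta}(F(z)-c)\bigr)
  =\int_0^s \Real f(te^{i\theta})\,dt-\Real(ce^{-i\theta}).
\]
Here we used $F(se^{i\theta})=e^{i\theta}\int_0^s f(te^{i\theta})\,dt$. The right-hand side is $R_s(\theta)-\Real(ce^{-i\theta})$.

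As $s\uparrow r$, $R_s\to R_r$ uniformly in $\theta$. Note that $f$ is continuous on the closed disc $\overline{\D_r}$ because $r<1$, and $R_s$ is increasing in $s$ since $\Real f>0$. Hence $\min_\theta\bigl[R_s(\theta)-\Real(ce^{-i\theta})\bigr]\to\varrho_r(c)$, and for $s$ close to $r$ this minimum exceeds $\abs{w-c}$.

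\emph{The point $c$ is covered.} The same estimate with $c$ replaced by $0$ gives $\abs{F(z)}\ge \min_\theta R_s(\theta)$ on $\abs{z}=s$. For $s$ near $r$ this is greater than $\abs{c}$, by the hypothesis $\abs{c}<\min_\theta R_r(\theta)$ and uniform convergence. Since $F(0)=0$ and $F$ is univalent, $F-0$ winds once around $\abs{z}=s$. Rouché's theorem applied to $F-c$ then gives $c\in F(\D_s)$.

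We may assume $\varrho_r(c)>0$, since otherwise there is nothing to prove. Combining the two steps, the disc with centre $c$ and radius $\varrho_r(c)$ lies in $F(\D_r)$. It is the one-to-one image of the subdomain $F^{-1}(\text{disc})\cap\D_r$, which gives $\beta(F)\ge\varrho_r(c)$.

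The only delicate point is the limiting step $s\uparrow r$, needed because $f$ might not be controlled on $\abs{z}=r$ itself. Continuity of $f$ on $\overline{\D_r}$, which holds because $r<1$, settles it.
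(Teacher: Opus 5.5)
Your proof is correct and follows the paper's route: Noshiro--Warschawski for univalence, the radial projection bound $\abs{F(se^{i\theta})-c}\ge R_s(\theta)-\Real(ce^{-i\theta})$, and the case $c=0$ to show that $c$ lies in the image. The differences are minor: you turn the boundary bound into covering via Rouch\'e's theorem, whereas the paper uses $\partial F(\D_r)\subseteq F(\partial\D_r)$ to get $\dist(c,\partial F(\D_r))\ge\varrho_r(c)$; and since $F$ is holomorphic on a neighbourhood of $\overline{\D_r}$, you could work on $\abs{z}=r$ directly, so your limit $s\uparrow r$ is harmless but unnecessary.
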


\begin{proof}
The hypothesis $\Real f > 0$ makes $F$ univalent on $\D_r$: if
$F(z_2)=F(z_1)$ for distinct $z_1,z_2\in\D_r$, convexity of the disc gives
\[
  0=\frac{F(z_2)-F(z_1)}{z_2-z_1}
   =\int_0^1 f\bigl(z_1+s(z_2-z_1)\bigr)\,ds,
\]
whose real part is positive, a contradiction.
Hence $\Omega := F(\D_r)$ is a
domain containing $0$, and $\partial\Omega \subseteq F(\partial \D_r)$
since $F$ is holomorphic on a neighbourhood of $\overline{\D_r}$ and
thus open.

For $\abs{w} = r$ with $w = re^{i\theta}$, we have
$F(w) = \int_0^r f(te^{i\theta})e^{i\theta}\,dt$, so
\begin{equation}\label{eq:proj}
  \abs{F(w) - c} \ge \Real\Bigl( e^{-i\theta}\bigl(F(w)-c\bigr) \Bigr)
  = R_r(\theta) - \Real\bigl(ce^{-i\theta}\bigr) \ge \varrho_r(c).
\end{equation}
Therefore $\dist(c, \partial\Omega) \ge \dist(c, F(\partial\D_r)) \ge \varrho_r(c)$.
Applying \eqref{eq:proj} with $c = 0$ gives
$\dist(0,\partial\Omega) \ge \min_\theta R_r(\theta) > \abs{c}$, so $c \in \Omega$.
Since $\Omega$ is open and $\dist(c,\partial\Omega) \ge \varrho_r(c)$, the disc
of centre $c$ and radius $\varrho_r(c)$ lies in $\Omega$, and the disc is schlicht
because $F$ is univalent on $\D_r$.
\end{proof}

Finally, we record the symmetrisation that converts a direction $\theta$ into
real coefficient data. For $f \in \Bcl$ and $\theta \in \R$ set
\begin{equation}\label{eq:sym}
  f_\theta(z) = \frac{1}{2}\Bigl( f\bigl(ze^{i\theta}\bigr)
  + \overline{f\bigl(\bar z e^{i\theta}\bigr)} \Bigr).
\end{equation}

\begin{lemma}\label{lem:sym}
$f_\theta \in \Bcl$, its Taylor coefficients are real and equal to
$b_k(\theta) = \Real\bigl(a_k e^{ik\theta}\bigr)$, and
$f_\theta(t) = \Real f(te^{i\theta})$ for real $t \in (-1,1)$. Consequently,
for every $0<r<1$,
\[
  R_r(\theta) = \int_0^r f_\theta(t)\,dt
  \ge \Phi_r\bigl(b_2(\theta), b_3(\theta)\bigr).
\]
\end{lemma}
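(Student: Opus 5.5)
The plan is to verify the four claims about $f_\theta$ in order: holomorphy and normalisation, the Taylor coefficients, the restriction to the real axis, and membership in $\Bcl$. The inequality for $R_r$ will then follow directly from the definition \eqref{eq:Phi} of $\Phi_r$.

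First, $g(z):=\overline{f(\bar z e^{i\theta})}$ is holomorphic on $\D$, since it is a conjugate-reflected composition of $f$ with a rotation. Writing $f(w)=\sum a_k w^k$ gives
\[
  f(ze^{i\theta})=\sum_k a_k e^{ik\theta}z^k,
  \qquad
  g(z)=\sum_k \overline{a_k}e^{-ik\theta}z^k .
\]
Averaging the two series, the coefficients of $f_\theta$ are $\tfrac12\bigl(a_ke^{ik\theta}+\overline{a_ke^{ik\theta}}\bigr)=\Real(a_ke^{ik\theta})=b_k(\theta)$. These are real, and $f_\theta(0)=\Real a_0=1$.

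For real $t\in(-1,1)$ we have $\bar t=t$, so $f_\theta(t)=\tfrac12\bigl(f(te^{i\theta})+\overline{f(te^{i\theta})}\bigr)=\Real f(te^{i\theta})$. Integrating over $t\in[0,r]$ gives $R_r(\theta)=\int_0^r f_\theta(t)\,dt$.

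Next comes the growth condition, the only step that needs care. Let $z\in\D$. Since $\abs{ze^{i\theta}}=\abs{\bar ze^{i\theta}}=\abs{z}$, the hypothesis $f\in\Bcl$ bounds both terms by $1/(1-\abs z^2)$. The triangle inequality then gives
\[
  (1-\abs z^2)\abs{f_\theta(z)}\le \tfrac12(1+1)=1 .
\]
Together with $f_\theta(0)=1$ this shows $f_\theta\in\Bcl$.

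Finally, $f_\theta\in\Bcl$ has real coefficients with $a_2=b_2(\theta)$ and $a_3=b_3(\theta)$. It is therefore admissible in the infimum \eqref{eq:Phi} at the point $(b_2(\theta),b_3(\theta))$, which yields $R_r(\theta)\ge\Phi_r(b_2(\theta),b_3(\theta))$.

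None of these steps is a genuine obstacle. The one point to state carefully is that the conjugated term $g$ satisfies the same pointwise bound, because its argument $\bar z e^{i\theta}$ has the same modulus as $z$; convexity of the constraint under averaging then does the rest.
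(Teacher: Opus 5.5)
Your proposal is correct and follows essentially the same route as the paper. You verify membership in $\Bcl$ directly with the triangle inequality and the observation $\abs{\bar z e^{i\theta}}=\abs{z}$; the paper phrases the same step as convexity of $\Bcl$ plus its invariance under rotation and under $f\mapsto\overline{f(\bar{\,\cdot\,})}$, and then reads off the coefficients, the real-axis identity and the final inequality exactly as you do.
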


\begin{proof}
$\Bcl$ is convex, invariant under $f \mapsto f(e^{i\theta}\,\cdot\,)$ and under
$f \mapsto \overline{f(\bar{\,\cdot\,})}$; $f_\theta$ is the average of two such
members, so $f_\theta \in \Bcl$. The coefficient and real-axis statements are
immediate from~\eqref{eq:sym}, and the last display is the
definition~\eqref{eq:Phi} of~$\Phi_r$.
\end{proof}

\section{The linear program and its certificates}\label{sec:cert}

Recall that $r_0 = 1/\sqrt{3}$ and put
\[
  w_k = \int_0^{r_0} x^k \, dx = \frac{r_0^{\,k+1}}{k+1},
  \qquad k \ge 0 .
\]
For $f \in \Bcl$ with real coefficients, $\int_0^{r_0} f = \sum_k a_k w_k$.

For a triple $(\rho,\psi,\gamma) \in [0,1) \times \R \times \R$ define the
\emph{row}
\begin{equation}\label{eq:row}
  r_k(\rho,\psi,\gamma) = (1-\rho^2)\,\rho^k \cos(k\psi - \gamma),
  \qquad k \ge 0 .
\end{equation}

\begin{lemma}\label{lem:rowvalid} For every $f \in \Bcl$ where $f(z) = \sum_k
a_k z^k$ with real coefficients, and every $(\rho,\psi,\gamma)$ with $0 \le \rho
< 1$,
\[
  \sum_{k \ge 0} a_k\, r_k(\rho,\psi,\gamma) \le 1 ,
\]
and the series converges absolutely.
\end{lemma}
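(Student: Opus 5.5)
The plan is to evaluate $f$ at the point $z=\rho e^{i\psi}$ and recognise the series as a real part. Since $\rho<1$, the point $z$ lies in $\D$, and the Taylor series of $f$ converges absolutely there. Because the coefficients $a_k$ are real,
\[
  \sum_{k\ge0} a_k\, r_k(\rho,\psi,\gamma)
  = (1-\rho^2)\sum_{k\ge0} a_k\rho^k\Real\bigl(e^{i(k\psi-\gamma)}\bigr)
  = (1-\rho^2)\,\Real\Bigl(e^{-i\gamma}\sum_{k\ge0} a_k \rho^k e^{ik\psi}\Bigr)
  = (1-\rho^2)\,\Real\bigl(e^{-i\gamma} f(z)\bigr).
\]
Interchanging the real part with the infinite sum is legitimate because the series converges absolutely.

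Absolute convergence follows from the bound $\abs{r_k}\le(1-\rho^2)\rho^k$ together with Lemma~\ref{lem:elem}(iii), which gives $\abs{a_k}\le \frac{e}{2}(k+2)$. Hence
\[
  \sum_k \abs{a_k r_k} \le \frac{e}{2}(1-\rho^2)\sum_k (k+2)\rho^k < \infty .
\]
Alternatively, one can simply note that the radius of convergence of a function holomorphic on $\D$ is at least $1$.

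For the inequality, we use $\Real w\le\abs{w}$ to get
\[
  (1-\rho^2)\Real\bigl(e^{-i\gamma}f(z)\bigr)\le(1-\abs{z}^2)\abs{f(z)}\le1,
\]
where the last step is the defining constraint of $\Bcl$ in \eqref{eq:class}, applied with $\abs{z}=\rho$. At $\rho=0$ the row reduces to $r_0=\cos\gamma$ with all other entries zero, so the sum is $a_0\cos\gamma=\cos\gamma\le1$, which is consistent with the formula.

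There is no real obstacle here. The only point needing care is that the real-coefficient hypothesis is what allows us to write $a_k\cos(k\psi-\gamma)=\Real\bigl(a_ke^{i(k\psi-\gamma)}\bigr)$ term by term; for complex $a_k$ this identity would fail.
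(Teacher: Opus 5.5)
Your proof is correct and matches the paper's: put $z=\rho e^{i\psi}$, use the reality of the $a_k$ to identify the series with $(1-\rho^2)\Real\bigl(e^{-i\gamma}f(z)\bigr)$, bound this by $(1-\abs{z}^2)\abs{f(z)}\le1$, and get absolute convergence from Lemma~\ref{lem:elem}(iii). Your remark that absolute convergence also follows simply from $\rho<1$ being inside the disc of convergence is valid as well.
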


\begin{proof}
Put $z = \rho e^{i\psi}$. Since the $a_k$ are real,
$\Real\bigl(e^{-i\gamma}f(z)\bigr) = \sum_k a_k \rho^k \cos(k\psi - \gamma)$, and
by assumption this is at most $\abs{f(z)} \le (1-\rho^2)^{-1}$. Multiply by
$1-\rho^2$. Absolute convergence follows from Lemma~\ref{lem:elem}(iii).
\end{proof}

The next theorem is the basis for the computational parts of the paper. It
converts any finite family of non-negative multipliers into an affine minorant
of $\Phi_{r_0}$, with no optimality, convergence or duality-gap hypothesis
whatsoever.

\begin{theorem}[Certificate inequality]\label{thm:cert}
Let $K \ge 4$ be an integer, let $(\rho_j,\psi_j,\gamma_j)_{j=1}^{n}$ be rows
with $0 \le \rho_j \le \rho_{\max} < 1$, and let $\lambda_1,\dots,\lambda_n \ge 0$.
Define
\[
  d_k = w_k + \sum_{j=1}^n \lambda_j\, r_k(\rho_j,\psi_j,\gamma_j),
  \qquad 0 \le k \le K,
\]
and the tail quantities
\[
  T = \frac{e}{2}\sum_{j=1}^n \lambda_j (1-\rho_j^2)
          \sum_{k > K} (k+2)\rho_j^{\,k},
  \qquad
  T_0 = \frac{e}{2}\,\frac{K+3}{K+2}\cdot\frac{r_0^{\,K+2}}{1-r_0}.
\]
Put
\begin{equation}\label{eq:e0}
  e_0 = d_0 - \sum_{j=1}^n \lambda_j
        - \sum_{k=4}^{K} B_k\,\abs{d_k} - T - T_0 .
\end{equation}
Then for all $(b_2,b_3) \in \R^2$,
\begin{equation}\label{eq:minorant}
  \Phi_{r_0}(b_2,b_3) \ge e_0 + d_2\,b_2 + d_3\,b_3 .
\end{equation}
\end{theorem}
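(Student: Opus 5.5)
The plan is the weak-duality argument for the linear program described in the introduction. The multiplier inequalities are folded into the objective, and then every coefficient not fixed by $(b_2,b_3)$ is bounded crudely. Fix $(b_2,b_3)\in\R^2$. If no admissible $f$ exists, then $\Phi_{r_0}(b_2,b_3)=+\infty$ and there is nothing to prove. Otherwise, take any $f=\sum_k a_kz^k\in\Bcl$ with real coefficients, $a_2=b_2$ and $a_3=b_3$. It suffices to show that $\int_0^{r_0}f\ge e_0+d_2b_2+d_3b_3$ and then take the infimum.

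\textbf{Step 1: add the row inequalities.} By Lemma~\ref{lem:elem}(iii), $\int_0^{r_0}f=\sum_k a_kw_k$ with absolute convergence. By Lemma~\ref{lem:rowvalid}, each quantity $\sum_k a_k r_k(\rho_j,\psi_j,\gamma_j)-1$ is $\le0$ and absolutely convergent. Multiplying these by $\lambda_j\ge0$ and adding them to the objective can only decrease it. Since there are finitely many $j$ and all series converge absolutely, the sums may be interchanged, giving
\[
  \int_0^{r_0}f\;\ge\;\sum_{k\ge0}a_k\Bigl(w_k+\sum_j\lambda_j r_k(\rho_j,\psi_j,\gamma_j)\Bigr)-\sum_j\lambda_j .
\]
For $k\le K$ the bracket is exactly $d_k$.

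\textbf{Step 2: split the sum over $k$.} Lemma~\ref{lem:elem}(i) gives $a_0=1$ and $a_1=0$, so the $k=0$ term is $d_0$ and the $k=1$ term vanishes. The terms $k=2,3$ are $d_2b_2+d_3b_3$; this is where $K\ge4$ ensures that they lie inside the finite block. For $4\le k\le K$ we use $a_kd_k\ge-B_k\abs{d_k}$ from Lemma~\ref{lem:elem}(iii).

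\textbf{Step 3: the tail $k>K$.} This is the only real bookkeeping, and it is where I expect the care to be needed. We use $\abs{a_k}\le\frac e2(k+2)$ and split the tail coefficient into its two contributions, $w_k$ and the row terms.

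For the row part, $\abs{r_k(\rho_j,\psi_j,\gamma_j)}\le(1-\rho_j^2)\rho_j^k$. Hence the total contribution is at least $-T$, and $T$ is finite because $\rho_j\le\rho_{\max}<1$.

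For the $w_k$ part,
\[
  \sum_{k>K}\frac e2(k+2)\frac{r_0^{k+1}}{k+1}\le\frac e2\cdot\frac{K+3}{K+2}\sum_{k\ge K+1}r_0^{k+1}=T_0 .
\]
This holds because $(k+2)/(k+1)$ is decreasing in $k$ and is therefore at most its value $(K+3)/(K+2)$ at $k=K+1$. The remaining geometric sum is $r_0^{K+2}/(1-r_0)$.

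Collecting the three steps gives $\int_0^{r_0}f\ge d_0-\sum_j\lambda_j-\sum_{k=4}^KB_k\abs{d_k}-T-T_0+d_2b_2+d_3b_3=e_0+d_2b_2+d_3b_3$. Taking the infimum over admissible $f$ yields \eqref{eq:minorant}.
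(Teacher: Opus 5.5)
Your proposal is correct and is essentially the paper's own proof. You fold the $\lambda$-weighted row inequalities of Lemma~\ref{lem:rowvalid} into the objective, then use $a_0=1$, $a_1=0$, $a_2=b_2$, $a_3=b_3$ and $\abs{a_k}\le B_k$ on the finite block, and finally bound the tail $k>K$ by splitting $d_k$ into its row part (giving $T$) and its $w_k$ part (giving $T_0$ via $(k+2)/(k+1)\le(K+3)/(K+2)$), exactly as in the paper.
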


\begin{proof}
Let $f \in \Bcl$ have real coefficients with $a_2 = b_2$, $a_3 = b_3$; if there
is no such $f$ then $\Phi_{r_0}(b_2,b_3) = +\infty$ and there is nothing to
prove.
Write $q_k = \sum_j \lambda_j r_k(\rho_j,\psi_j,\gamma_j)$, so that
$d_k = w_k + q_k$ for $k \le K$, and extend $d_k := w_k + q_k$ to all $k$.
Multiplying the inequality of Lemma~\ref{lem:rowvalid} for the $j$-th row by
$\lambda_j \ge 0$ and summing,
\begin{equation}\label{eq:sumrows}
  \sum_{k \ge 0} a_k q_k \le \sum_{j=1}^n \lambda_j .
\end{equation}
Hence
\[
  \int_0^{r_0} f = \sum_{k\ge0} a_k w_k
  = \sum_{k \ge 0} a_k d_k - \sum_{k\ge0} a_k q_k
  \ge \sum_{k \ge 0} a_k d_k - \sum_{j} \lambda_j .
\]
By Lemma~\ref{lem:elem}, $a_0 = 1$, $a_1 = 0$, $a_2 = b_2$, $a_3 = b_3$ and
$\abs{a_k} \le B_k$, so
\[
  \sum_{k\ge0} a_k d_k \ge d_0 + d_2 b_2 + d_3 b_3
  - \sum_{k = 4}^{K} B_k \abs{d_k} - \sum_{k > K} B_k \abs{d_k}.
\]
It remains to bound the last sum by $T + T_0$. For $k > K$,
$\abs{d_k} \le w_k + \sum_j \lambda_j (1-\rho_j^2)\rho_j^{\,k}$ by
\eqref{eq:row}, and $B_k \le \frac{e}{2}(k+2)$ by Lemma~\ref{lem:elem}(iii).
The contribution of the $\lambda$ part is exactly $T$. For the $w$ part,
$w_k = r_0^{k+1}/(k+1)$ and $(k+2)/(k+1) \le (K+3)/(K+2)$ for $k > K$, so
\[
  \sum_{k>K} B_k w_k \le \frac{e}{2}\,\frac{K+3}{K+2}
  \sum_{k > K} r_0^{\,k+1} = T_0 .
\]
Combining the three displays gives \eqref{eq:minorant}.
\end{proof}

\begin{remark}\label{rem:mix}
We call the following operation \emph{pooling}. The set of admissible
multiplier vectors is a convex cone, and the map
$\lambda\mapsto(d_0,d_2,d_3)$ is affine along convex combinations (because
$\sum_i p_i=1$ reproduces the fixed vector~$w$). Since
$\abs[\big]{\sum_i p_i d^{(i)}_k}\le
\sum_i p_i\abs{d^{(i)}_k}$, the certificate built from
$\sum_i p_i\lambda^{(i)}$ is at least as strong as the corresponding convex
combination of the individual minorants. Thus a finite family of
certificates may be collapsed into a single pooled certificate.
\end{remark}

\begin{remark}\label{rem:free}
If we take $b_2 = b_3 = 0$ in the certificate we actually use, we get
$\Phi_{r_0}(0,0) \ge 0.4962$, and dropping the coefficient constraints
altogether the same machinery reproduces Bonk's sharp radial bound
$\sqrt{3}/4 = 0.4330127$ to five decimals. This is the sense in which the
linear program is the sharp form of the rational minorants
of~\cite{Bonk,ChenGauthier}.
\end{remark}

A parallel argument bounds individual coefficients.

\begin{theorem}\label{thm:a3cert}
With notation as in Theorem~\ref{thm:cert}, set
$q_k = \sum_j \lambda_j r_k(\rho_j,\psi_j,\gamma_j)$ and suppose $q_3 > 0$.
Then every $f \in \Bcl$ with real coefficients satisfies
\[
  a_3 \le \frac{1}{q_3}\Bigl(\, \sum_j \lambda_j - q_0 + \abs{q_2}
  + \sum_{k=4}^{K} B_k \abs{q_k} + T \Bigr).
\]
The same bound holds for $\abs{a_3}$ for any $f \in \Bcl$ whether or
not its coefficients are real.
\end{theorem}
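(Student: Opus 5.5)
We argue as in the proof of Theorem~\ref{thm:cert}, with the objective vector $w$ removed. Let $f\in\Bcl$ have real coefficients. Multiply the inequality of Lemma~\ref{lem:rowvalid} for the $j$-th row by $\lambda_j\ge0$ and sum over $j$. Each series converges absolutely, so the sums may be interchanged, and this gives
\[
  \sum_{k\ge0} a_k q_k \le \sum_j \lambda_j .
\]
By Lemma~\ref{lem:elem}(i), $a_0=1$ and $a_1=0$. Isolating the cubic term therefore yields
\[
  q_3 a_3 \le \sum_j\lambda_j - q_0 - q_2a_2 - \sum_{k\ge4} a_k q_k .
\]

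We next bound the terms on the right from above. Lemma~\ref{lem:elem}(ii) gives $\abs{a_2}\le1$, so $-q_2a_2\le\abs{q_2}$. For $4\le k\le K$ we use $-a_kq_k\le B_k\abs{q_k}$. For $k>K$ we have $\abs{q_k}\le\sum_j\lambda_j(1-\rho_j^2)\rho_j^k$ and $B_k\le\frac e2(k+2)$, so
\[
  \sum_{k>K}\abs{a_k}\abs{q_k}\le T,
\]
which is exactly the tail quantity of Theorem~\ref{thm:cert}. Since the $w$ part is absent here, no $T_0$ term appears. Dividing by $q_3>0$ gives the stated upper bound for $a_3$.

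For the complex case, let $f\in\Bcl$ be arbitrary and choose $\theta$ with $a_3e^{3i\theta}=\abs{a_3}$. Lemma~\ref{lem:sym} shows that $f_\theta\in\Bcl$ has real coefficients $b_k(\theta)=\Real(a_ke^{ik\theta})$, and its third coefficient is $b_3(\theta)=\abs{a_3}$. Applying the real case to $f_\theta$ gives the same bound for $\abs{a_3}$. Note that the real-case argument used only $\abs{b_2}\le1$ and $\abs{b_k}\le B_k$, and these hold for $f_\theta$ since $f_\theta\in\Bcl$ and Lemma~\ref{lem:elem} applies to it.

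There is no real obstacle. The only points needing care are the absolute convergence that justifies swapping the $j$ and $k$ sums, which follows from Lemma~\ref{lem:elem}(iii) and $\rho_j\le\rho_{\max}<1$, and the correct sign handling when moving the $q_0$ and $q_2$ terms across the inequality.
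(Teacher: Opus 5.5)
Your proposal is correct and follows the paper's proof: the same summed row inequality, the same bounds $\abs{a_2}\le1$ and $\abs{a_k}\le B_k$ with the tail controlled by $T$, and the same symmetrisation $f_\theta$ with $e^{3i\theta}a_3=\abs{a_3}$ for the passage to $\abs{a_3}$. The paper also invokes $f(z)\mapsto f(-z)$ for real-coefficient $f$. Your argument shows this step is redundant, since for real coefficients with $a_3<0$ the choice $\theta=\pi$ gives $f_\pi(z)=f(-z)$.
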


\begin{proof}
From \eqref{eq:sumrows}, $q_3 a_3 \le \sum_j \lambda_j - a_0q_0 - a_1q_1 -
a_2 q_2 - \sum_{k \ge 4} a_k q_k$. Use $a_0 = 1$, $a_1 = 0$, $\abs{a_2} \le 1$,
$\abs{a_k} \le B_k$, and bound the tail $k > K$ by $T$ as in the previous proof.
For the passage to $\abs{a_3}$: the class $\Bcl$ and the reality of the
coefficients are preserved by $f(z) \mapsto f(-z)$, which sends $a_3$ to
$-a_3$. Finally, for arbitrary $f \in \Bcl$ choose $\tau$ with
$e^{3i\tau}a_3 = \abs{a_3}$ and apply the real-coefficient case to the
symmetrisation~\eqref{eq:sym} of $f(e^{i\tau}\,\cdot\,)$, whose third
coefficient is $\Real(e^{3i\tau}a_3) = \abs{a_3}$.
\end{proof}

\section{The verified certificates}\label{sec:numbers}

We begin with two multiplier vectors and verify the resulting inequalities
in Arb ball arithmetic at $40$ digits. The data, and the verification program,
accompany this paper; see Section~\ref{sec:impl}.

\begin{proposition}\label{prop:certs}
There are explicit finitely supported non-negative multiplier vectors, with
$K = 320$ and all $\rho_j \le 0.95$, for which Theorems~\ref{thm:cert} and
\ref{thm:a3cert} yield
\begin{align}
  \abs{a_3} &\le 3.28877762819 \qquad\text{for all } f \in \Bcl,
  \label{eq:a3num}\\[2pt]
  \Phi_{r_0}(b_2,b_3) &\ge 0.4961640 + 0.0509380\,b_2 + 0.0092164\,b_3
  \qquad \text{for } \abs{b_2} \le 1,\ \abs{b_3} \le C,
  \label{eq:minnum}
\end{align}
where $C = 3.2888$.
\end{proposition}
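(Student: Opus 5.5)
The proof is a certificate check. It splits into a search phase, which is not trusted, and a verification phase in Arb ball arithmetic, which alone enters the proof.

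\emph{Search.} For \eqref{eq:a3num} I would discretise the constraint set $(\rho,\psi,\gamma)$ on a grid, say $\rho\in[0,0.95]$ with a few hundred radii, $\psi,\gamma$ on uniform angular grids, truncated at $K=320$. I would then solve in floating point the finite LP that maximises $a_3$ subject to $a_0=1$, $a_1=0$, $\abs{a_2}\le1$, $\abs{a_k}\le B_k$ and the row inequalities. The dual solution supplies non-negative multipliers $\lambda_j$ with $q_3>0$. Small negative entries are clipped to zero, and the support is pruned to a few hundred rows.

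For \eqref{eq:minnum} I would solve the LP that minimises $\sum_k a_k w_k$ subject to the same constraints, with the additional constraints $a_2=b_2$, $a_3=b_3$. I would solve it near the coefficient points that matter: around $(-1,0)$, around the corners of $[-1,1]\times[-C,C]$, and around the Bonk point. The duals of these problems give candidate $\lambda$'s. By Remark~\ref{rem:mix}, a convex combination of them can be pooled into a single vector, so I would tune the pooling weights to push up the resulting affine minorant over the whole rectangle $\abs{b_2}\le1$, $\abs{b_3}\le C$. The target is the displayed coefficients $0.4961640$, $0.0509380$, $0.0092164$.

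\emph{Verification.} The multipliers are stored as exact rationals, or as exactly representable binary floats, together with the row parameters. In Arb at 40 digits I would enclose $w_k$ and $r_k(\rho_j,\psi_j,\gamma_j)$ for $k\le K$, and then $d_k$ and $q_k$. The tail $T$ can be summed in closed form:
\[
\sum_{k>K}(k+2)\rho^k=\rho^{K+1}\Bigl(\frac{K+3}{1-\rho}+\frac{\rho}{(1-\rho)^2}\Bigr).
\]
$T_0$ is explicit, and $B_k$ is enclosed directly from its formula. This yields rigorous balls for $e_0$, $d_2$, $d_3$. Theorem~\ref{thm:cert} then gives $\Phi_{r_0}\ge e_0+d_2b_2+d_3b_3$.

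To state the rounded coefficients in \eqref{eq:minnum}, I would check that the lower end of the ball for $e_0$ dominates the constant after accounting for the rounding of $d_2$ and $d_3$. Since the rounding errors are multiplied by $b_2$ and $b_3$, I would absorb $\abs{d_2-0.0509380}\cdot1+\abs{d_3-0.0092164}\cdot C$ into the constant, using $\abs{b_2}\le1$ and $\abs{b_3}\le C$. This is where the restriction to the rectangle is needed. Likewise, Theorem~\ref{thm:a3cert} gives an enclosure of the $a_3$ bound, and I would check that its upper end is at most $3.28877762819$.

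\emph{Main obstacle.} The main difficulty is the search: making $e_0$ nearly as large as the LP value. The terms $\sum_{k\ge4}B_k\abs{d_k}$ are costly, because $B_k$ grows linearly in $k$. The multipliers therefore need to make $d_k$ nearly vanish for $4\le k\le K$, and this must hold for the rounded multipliers, not merely for the floating-point optimum. I would first handle this by an iterative refinement step. Starting from the floating-point dual, I would solve a small least-squares or LP correction in high precision that zeroes the residuals $d_k$, $k\ge4$, while keeping $\lambda\ge0$. I would also use many rows with $\rho$ close to $0.95$, so that the high-order $d_k$ can be controlled while $T$ stays negligible.
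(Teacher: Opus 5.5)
Your proposal is correct and is essentially the paper's argument: an untrusted double-precision LP search whose duals are pooled via Remark~\ref{rem:mix}, then a 40-digit Arb recomputation from the stored data of the $d_k$, the $B_k$, the closed-form tails $T,T_0$ and the Theorem~\ref{thm:a3cert} bound, with the decimal slopes in \eqref{eq:minnum} justified, as you describe, by absorbing $\abs{d_2-0.0509380}+C\abs{d_3-0.0092164}$ into the constant on the box. The differences lie only in the untrusted search and do not affect validity: the paper chooses $\gamma_j=\arg f^\star(\rho_je^{i\psi_j})$ adaptively rather than gridding $\gamma$, pools by maximising the region minimum \eqref{eq:regionmin}, and needs no separate residual-zeroing step because $\sum_{k\ge4}B_k\abs{d_k}$ is simply paid in the verified $e_0$.
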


The affine minorant in \eqref{eq:minnum} is a slightly weakened decimal version
of the exact one encoded by the stored pooled multiplier vector. Since $b_2$
and $b_3$ may have either sign, we verified the displayed inequality directly
on the stated box; it holds there with a margin of $5.7\cdot 10^{-8}$. The proof
of Proposition~\ref{prop:pooled} uses the exact certificate coefficients
$e_0,d_2,d_3$ from Theorem~\ref{thm:cert}, evaluated with Arb enclosures.

Theorem~\ref{thm:a3} is \eqref{eq:a3num} rounded up. Specialising
\eqref{eq:minnum} at $b_2 = -1$, $b_3 = 0$ gives
$\Phi_{r_0}(-1,0) \ge 0.4452260$, i.e.\ a gain of $0.0122133$ over
$\sqrt{3}/4$;
a certificate computed specifically at that point does better:

\begin{proposition}\label{prop:phi10}
$\Phi_{r_0}(-1,0) \ge \dfrac{\sqrt{3}}{4} + 0.0150346379669 .$
\end{proposition}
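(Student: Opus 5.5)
The plan is to apply Theorem~\ref{thm:cert} once more, now with a multiplier vector tuned to the single point $(b_2,b_3)=(-1,0)$ instead of pooled over the box. Since only the value $e_0 - d_2$ matters (the $d_3$ term drops out at $b_3=0$), the optimisation objective changes: we maximise $e_0(\lambda) - d_2(\lambda)$ over $\lambda\ge0$. By \eqref{eq:minorant} any feasible $\lambda$ then yields $\Phi_{r_0}(-1,0)\ge e_0-d_2$, with no duality or optimality hypothesis. Nothing about $a_3$ needs to be known for this point evaluation. The bound $C$ is not used, because $b_3=0$ exactly and the rows never multiply $b_3$ by anything unknown.

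The search proceeds in steps.
\begin{enumerate}
\item Truncate at $K=320$ and discretise the row parameters $(\rho,\psi,\gamma)$ on a grid with $\rho\le 0.95$. Real coefficients allow $\psi\in[0,\pi]$, and $\gamma$ is sampled on $[0,2\pi)$.
\item Solve the finite primal LP: minimise $\sum_{k\le K} a_k w_k$ subject to $a_0=1$, $a_1=0$, $a_2=-1$, $a_3=0$, the grid rows $\sum_k a_k r_k\le1$, and the box constraints $\abs{a_k}\le B_k$ for $k\ge 4$.
\item Its dual gives candidate $\lambda$'s. The box constraints are exactly what the $B_k\abs{d_k}$ terms in \eqref{eq:e0} pay for, so the finite LP dual matches the structure of $e_0$.
\item Refine the active rows by local Newton or column generation: find the most violated $(\rho,\psi,\gamma)$ for the current primal and add it. Iterate until the primal value is about $0.44831$.
\end{enumerate}

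For verification, round $\lambda_j$ and the row parameters to exact rationals with $\lambda_j\ge0$. Then evaluate $d_0$, $d_2$, $d_k$ ($4\le k\le K$), $T$ and $T_0$ in Arb ball arithmetic at 40 digits and check that the enclosure of $e_0-d_2$ exceeds $\sqrt3/4+0.0150346379669$. The tail sums $T$ and $T_0$ have closed forms in $\rho_j$, since
\[
\sum_{k>K}(k+2)\rho^k
\]
is a differentiated geometric series, so they enclose rigorously.

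The main obstacle I expect is the slack terms $\sum_{k\ge4}B_k\abs{d_k}$. The Cauchy bounds $B_k$ grow linearly in $k$, so any residual $d_k$ is costly, and the multipliers must nearly cancel $w_k$ in every high coefficient. This demands rows with $\rho$ fairly close to $1$ and many of them, which in turn makes the LP ill-conditioned and makes rounding of $\lambda$ perturb the $d_k$. My first remedy is to keep the box constraints in the floating LP, so the dual already accounts for them. A final polishing step then re-solves a small LP for $\lambda$ alone, with the row parameters frozen, directly maximising the certified quantity $e_0-d_2$ evaluated in high precision.
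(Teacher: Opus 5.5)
Your argument is the paper's: a multiplier vector tuned to the single point $(-1,0)$ is fed into Theorem~\ref{thm:cert}, which gives $\Phi_{r_0}(-1,0)\ge e_0-d_2$ without using $C$, and $e_0-d_2$ is then enclosed in Arb from exactly interpreted data. Two cautions about the search stage, neither affecting the logic: first, the primal value you iterate towards, $0.44831$, is $\sqrt{3}/4+0.0153$ from the main theorem, whereas the paper's primal reconnaissance puts the discretised value at only about $\sqrt{3}/4+0.01507\approx0.44809$, so that stopping rule would never fire; second, the stated digits come from one particular stored certificate ($44\times129$ rows, $\rho_{\max}=0.94$, $K=260$), and the paper's run with your parameters $K=320$, $\rho\le0.95$ certified only $\sqrt{3}/4+0.0149407644$ at this point, so reaching the exact constant depends on the multipliers you actually find.
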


For comparison, \cite[Lemma 6]{ChenGauthier} gives
$\Phi_{r_0}(-1,0) \ge \sqrt{3}/4 + 0.00548$;
Proposition~\ref{prop:phi10} improves their constant by a factor of more than
$2.74$. We record this point-tuned estimate for comparison with their result;
it is not used in the proof of the main theorem.

Proposition~\ref{prop:phi10} comes from a separate, coarser discretisation
($44 \times 129$ rows, $\rho_{\max} = 0.94$, $K = 260$) which happens to give a
better verified value at this particular point than the finer run used for
Proposition~\ref{prop:certs}; the finer run certifies
$\Phi_{r_0}(-1,0) \ge \sqrt{3}/4 + 0.0149407644$. Each is an independently verified
inequality, and we quote whichever is stronger for each statement; nothing
depends on the two being combined.

\begin{remark}
For orientation, consider the minorant \eqref{eq:minnum} at two familiar
members of $\Bcl$. At the Bonk extremal
\[
f_1(z)=\frac{1-\sqrt{3} z}{(1-z/\sqrt{3})^3},
\]
for which $a_2=-1$, $a_3=-8/(3\sqrt{3})$ and
$\int_0^{r_0}f_1=\sqrt{3}/4$, its right-hand side is $0.4310364$, lying
$0.0019763$ below the exact value $0.4330127$. At the constant function
$f\equiv1$, the minorant gives $0.4961640$, compared with the exact value
$r_0=0.5773503$.
\end{remark}

\section{A moment inequality}\label{sec:moments}

\begin{theorem}\label{thm:moment}
Let $\mu$ be a probability measure on $\R/2\pi\Z$ and
$\nu_k = \int e^{ik\theta}\,d\mu(\theta)$. If $\nu_1 = 0$, then
\[
  \abs{\nu_3} \le 1 - \abs{\nu_2}^2.
\]
\end{theorem}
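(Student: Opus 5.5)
The plan is to obtain the inequality from a single application of the Cauchy--Schwarz inequality in $L^2(\mu)$, choosing two trigonometric polynomials whose inner product is exactly $\nu_3$ and whose norms are both $\sqrt{1-\abs{\nu_2}^2}$. Everything uses only $\nu_0=1$, $\nu_{-k}=\overline{\nu_k}$ and the hypothesis $\nu_1=0$.

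\emph{Choice of functions.} Put
\[
  u(\theta)=e^{2i\theta}-\nu_2,\qquad v(\theta)=e^{-i\theta}-\nu_2e^{i\theta}.
\]
A first attempt with $v=e^{-i\theta}$ only gives $\abs{\nu_3}\le\sqrt{1-\abs{\nu_2}^2}$. Adding the correction $-\nu_2e^{i\theta}$ to $v$ is the key step: it does not change the inner product with $u$, but it shrinks $\norm$ of $v$ to the same size as that of $u$.

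\emph{Inner product.} Expanding,
\[
  u\,\bar v = e^{3i\theta}-\overline{\nu_2}\,e^{i\theta}-\nu_2e^{i\theta}+\abs{\nu_2}^2e^{-i\theta}.
\]
Integrating against $\mu$ and using $\nu_1=\overline{\nu_{-1}}=0$ kills the last three terms, so
\[
  \int u\bar v\,d\mu=\nu_3 .
\]

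\emph{Norms.} First,
\[
  \int\abs{u}^2\,d\mu = 1-2\Real\bigl(\overline{\nu_2}\,\nu_2\bigr)+\abs{\nu_2}^2=1-\abs{\nu_2}^2 .
\]
Second,
\[
  \abs{v}^2=1+\abs{\nu_2}^2-\nu_2e^{2i\theta}-\overline{\nu_2}e^{-2i\theta},
\]
whose integral is $1+\abs{\nu_2}^2-2\abs{\nu_2}^2=1-\abs{\nu_2}^2$.

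\emph{Conclusion.} Cauchy--Schwarz in $L^2(\mu)$ now gives
\[
  \abs{\nu_3}^2\le\Bigl(\int\abs{u}^2\,d\mu\Bigr)\Bigl(\int\abs{v}^2\,d\mu\Bigr)=\bigl(1-\abs{\nu_2}^2\bigr)^2 .
\]
Since $1-\abs{\nu_2}^2=\int\abs{u}^2\,d\mu\ge0$, taking square roots yields $\abs{\nu_3}\le 1-\abs{\nu_2}^2$.

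The only real obstacle is finding the correction term in $v$. Once $v$ is fixed, the verification is routine bookkeeping of which moments vanish.
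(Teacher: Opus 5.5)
\textbf{The gap is in your computation of $\int\abs{v}^2\,d\mu$.} Your pointwise expansion $\abs{v}^2=1+\abs{\nu_2}^2-\nu_2e^{2i\theta}-\overline{\nu_2}e^{-2i\theta}$ is right. But $\int\nu_2e^{2i\theta}\,d\mu=\nu_2^2$, not $\abs{\nu_2}^2$, so in fact
\[
  \int\abs{v}^2\,d\mu=1+\abs{\nu_2}^2-2\Real\bigl(\nu_2^2\bigr).
\]
This is strictly larger than $1-\abs{\nu_2}^2$ whenever $\nu_2\notin\R$. For example, $\mu=\tfrac12\delta_{\pi/4}+\tfrac12\delta_{5\pi/4}$ has $\nu_1=0$ and $\nu_2=i$. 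Then $1-\abs{\nu_2}^2=0$, while $\int\abs{v}^2\,d\mu=4$. With your $v$, Cauchy--Schwarz only gives $\abs{\nu_3}^2\le(1-\abs{\nu_2}^2)\bigl(1+\abs{\nu_2}^2-2\Real(\nu_2^2)\bigr)$. That is weaker than the claim whenever $\nu_2$ is not real.

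\textbf{The repair is a conjugation.} You are right that every multiple of $e^{i\theta}$ is orthogonal to $u$ in $L^2(\mu)$, so the inner product stays $\nu_3$. The multiple that minimises the norm is the orthogonal projection of $e^{-i\theta}$ onto $e^{i\theta}$. Since $\int e^{-i\theta}\,\overline{e^{i\theta}}\,d\mu=\overline{\nu_2}$, you should take $v=e^{-i\theta}-\overline{\nu_2}e^{i\theta}$. Then $u\bar v=e^{3i\theta}-2\nu_2e^{i\theta}+\nu_2^2e^{-i\theta}$ still integrates to $\nu_3$. Moreover $\int\abs{v}^2\,d\mu=1+\abs{\nu_2}^2-2\abs{\nu_2}^2=1-\abs{\nu_2}^2$. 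Equivalently, you could first rotate $\mu$ so that $\nu_2\ge0$. Rotation multiplies $\nu_k$ by $e^{ik\tau}$ and preserves $\nu_1=0$, $\abs{\nu_2}$ and $\abs{\nu_3}$, after which your $v$ works as written.

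\textbf{Comparison with the paper.} Once repaired, your argument is correct and genuinely different from the paper's proof. The paper passes to the Carath\'eodory function $P$ of $\mu$ and forms $\omega=(P-1)/(P+1)$. It uses Schwarz's lemma to get $\abs{g}\le1$ for $g=\omega/z^2$, then applies Schwarz--Pick at $0$. Your route needs only positive semidefiniteness of the moment Gram matrix in $L^2(\mu)$, with no function theory. The paper's version reads off the equality case directly ($g$ a disc automorphism). In yours, equality means $u$ and $v$ are proportional in $L^2(\mu)$.
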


The inequality is classical and follows from the Schur parametrisation of the
Carath\'eodory class; see \cite[Section~1.3]{SimonOPUC} and~\cite{LiSugawa}.
We include a short proof for completeness.

\begin{proof}[Proof of Theorem~\ref{thm:moment}]
Associate to~$\mu$ its Carath\'eodory function
\[
 P(z)=\int\frac{e^{i\theta}+z}{e^{i\theta}-z}\,d\mu(\theta)
     =1+2\sum_{k\ge1}\overline{\nu_k}z^k
\]
and put $\omega=(P-1)/(P+1)$.  Then $\omega$ maps $\D$ to $\D$ and
$\omega(0)=0$.  If $\nu_1=0$, its expansion begins
\[
 \omega(z)=\overline{\nu_2}z^2+\overline{\nu_3}z^3+O(z^4).
\]
The Schwarz lemma applied twice shows that $g(z)=\omega(z)/z^2$ satisfies
$\abs{g}\le1$ on~$\D$.  If $g$ is a unimodular constant the desired inequality
is immediate; otherwise the maximum principle gives $g(\D)\subset\D$, and
Schwarz--Pick at zero gives
\[
 \abs{\nu_3}=\abs{g'(0)}\le1-\abs{g(0)}^2
             =1-\abs{\nu_2}^2.
\]
\end{proof}

The inequality is sharp.  For example, let $\pi/2\le\alpha\le\pi$ and put
\[
 q=\frac{1}{2(1-\cos\alpha)},\qquad
 p=\frac{-\cos\alpha}{1-\cos\alpha},\qquad
 \mu=p\delta_0+q\delta_\alpha+q\delta_{-\alpha}.
\]
Then $p+2q=1$, $\nu_1=0$, and a direct calculation gives
\[
 \nu_2=-(1+2\cos\alpha),\qquad
 \nu_3=-4\cos\alpha(1+\cos\alpha)=1-\abs{\nu_2}^2.
\]
More generally, equality in the non-degenerate case is equivalent to equality
in Schwarz--Pick above, hence to $g$ being a disc automorphism.

\section{The compact pooled bound}\label{sec:proof}

\begin{proof}[Proof of Proposition~\ref{prop:pooled}]
Let $F$ satisfy \eqref{eq:normalisation} and $f = F' \in \Bcl$ with
coefficients $a_2, a_3$. Let
$\ell(b_2,b_3) = e_0 + d_2b_2 + d_3b_3$ be the exact affine minorant defined
by the stored pooled certificate above. Theorem~\ref{thm:cert}, its Arb
verification, and Lemma~\ref{lem:sym} give
\begin{equation}\label{eq:Rlow}
  R_{r_0}(\theta) \ge \Phi_{r_0}\bigl(b_2(\theta),b_3(\theta)\bigr)
  \ge \ell\bigl(b_2(\theta), b_3(\theta)\bigr),
  \qquad b_k(\theta) = \Real\bigl(a_ke^{ik\theta}\bigr).
\end{equation}

\emph{Step 1: duality in the shift.} Let $\mathcal{P}$ be the set of Borel
probability measures on $\R/2\pi\Z$, a convex weak-$*$ compact set, and for
$\mu \in \mathcal{P}$ write $\nu_k(\mu) = \int e^{ik\theta}d\mu$. The function
$R_{r_0}$ is continuous, and $\min_\theta h(\theta) = \min_{\mu \in \mathcal{P}}
\int h\,d\mu$ for continuous $h$. Hence
\[
  \sup_{c \in \C} \min_{\theta}\Bigl[R_{r_0}(\theta)
    - \Real(ce^{-i\theta})\Bigr]
  = \sup_{c \in \C}\ \min_{\mu \in \mathcal{P}}
    \Bigl[ \int R_{r_0}\,d\mu
      - \Real\bigl(c\,\overline{\nu_1(\mu)}\bigr) \Bigr].
\]
The bracket is affine in $c \in \C \cong \R^2$ and affine and weak-$*$
continuous in $\mu$, so Sion's minimax theorem~\cite{Sion} applies and the
right-hand side equals
\[
  \min_{\mu \in \mathcal{P}}\ \sup_{c \in \C}
  \Bigl[ \int R_{r_0}\,d\mu
    - \Real\bigl(c\,\overline{\nu_1(\mu)}\bigr) \Bigr]
  = \min\Bigl\{ \int R_{r_0}\,d\mu : \mu \in \mathcal{P},\
    \nu_1(\mu) = 0 \Bigr\},
\]
since the supremum over $c$ is $+\infty$ unless $\nu_1(\mu) = 0$.

\emph{Step 2: the moment region.} Let $\mu \in \mathcal{P}$ with
$\nu_1(\mu) = 0$, and set
\[
  m_2 = \int b_2(\theta)\,d\mu = \Real\bigl(a_2\nu_2\bigr), \qquad
  m_3 = \int b_3(\theta)\,d\mu = \Real\bigl(a_3\nu_3\bigr).
\]
By \eqref{eq:Rlow} and the affineness of $\ell$,
\begin{equation}\label{eq:jensen}
  \int R_{r_0}\,d\mu
  \ge \int \ell\bigl(b_2(\theta),b_3(\theta)\bigr)d\mu
  = e_0 + d_2 m_2 + d_3 m_3 .
\end{equation}
By Lemma~\ref{lem:elem}(ii), $\abs{a_2} \le 1$, so $\abs{m_2} \le \abs{\nu_2}$; by
Theorem~\ref{thm:a3}, $\abs{a_3} \le C := 3.2888$, so using
Theorem~\ref{thm:moment},
\[
  \abs{m_3} \le C\abs{\nu_3} \le C\bigl(1 - \abs{\nu_2}^2\bigr)
  \le C\bigl(1 - m_2^2\bigr),
\]
the last step because $\abs{m_2} \le \abs{\nu_2}$. Hence $(m_2,m_3)$ lies in
\[
  \mathcal{R} = \bigl\{ (m_2,m_3) \in \R^2 :
  \abs{m_2} \le 1,\ \abs{m_3} \le C(1-m_2^2) \bigr\}.
\]

\emph{Step 3: the region minimum in closed form.} For fixed $m_2$, the minimum
of $e_0 + d_2m_2 + d_3m_3$ over the admissible $m_3$ is attained at
$m_3 = -\operatorname{sign}(d_3)\,C(1-m_2^2)$, giving
\[
  g(m_2) = e_0 + d_2 m_2 + \abs{d_3}C\,m_2^2 - \abs{d_3}C ,
\]
a convex quadratic in $m_2$. Therefore
\begin{equation}\label{eq:regionmin}
  \min_{\mathcal{R}} \bigl(e_0 + d_2m_2 + d_3m_3\bigr)
  = \min_{\abs{m_2} \le 1} g(m_2)
  = g\bigl(\operatorname{clamp}_{[-1,1]}(-d_2/(2\abs{d_3}C))\bigr) .
\end{equation}
Evaluating \eqref{eq:regionmin} from the Arb enclosures of the exact
coefficients $e_0,d_2,d_3$ and $C$ gives
\[
  \min_{\mathcal{R}} \bigl(e_0 + d_2m_2 + d_3m_3\bigr)
  \ge \frac{\sqrt{3}}{4} + L, \qquad L = 0.0114402996202,
\]
the interior vertex being the active branch (it sits at
$m_2 = -0.8402\ldots$).

\emph{Step 4: conclusion.} Combining Steps 1--3 with \eqref{eq:jensen},
\[
  \sup_{c} \min_\theta\Bigl[R_{r_0}(\theta)
    - \Real(ce^{-i\theta})\Bigr]
  \ge \frac{\sqrt{3}}{4} + L .
\]
Fix $\delta \in (0, L)$ and choose $c$ attaining this supremum up to $\delta$.
Taking $\theta = \arg c$ in the definition of $\varrho_{r_0}$ gives
$\varrho_{r_0}(c) \le R_{r_0}(\arg c) - \abs{c}$, whence, using
$R_{r_0}(\theta) \le \int_0^{r_0}(1-t^2)^{-1}dt = \operatorname{artanh}(r_0)
= 0.6584790$,
\[
  \abs{c} \le R_{r_0}(\arg c) - \varrho_{r_0}(c)
  \le \operatorname{artanh}(r_0) - \Bigl(\tfrac{\sqrt{3}}{4} + L - \delta\Bigr)
  < 0.2141 + \delta .
\]
On the other hand \eqref{eq:Rlow} and $\abs{b_2(\theta)} \le 1$,
$\abs{b_3(\theta)} \le C$ give the crude uniform bound
\[
  \min_\theta R_{r_0}(\theta)
  \ge e_0 - \abs{d_2} - \abs{d_3}C \ge 0.4149 ,
\]
so $\abs{c} < \min_\theta R_{r_0}(\theta)$ for $\delta$ small, and
Theorem~\ref{thm:bonk} and Lemma~\ref{lem:disc} (with $r=r_0$) yield
$\beta(F) \ge \sqrt{3}/4 + L - \delta$. Letting $\delta \downarrow 0$ and taking
the infimum over $F$ completes the proof.
\end{proof}

\section{The coefficient-dependent two-radius improvement}\label{sec:variable}

Increasing $r$ potentially improves the radial lower bound, but
Lemma~\ref{lem:disc} can be applied only when $\Real f>0$ throughout $\D_r$.
The point-value certificates below justify this increase beyond~$r_0$ for one
coefficient regime; an integral certificate then exploits the resulting extra
annulus.

Theorem~\ref{thm:cert} and its proof remain valid for $\Phi_r$ after replacing
$r_0$ by $r$ in the objective weights~$w_k$ and in~$T_0$. The same argument,
with objective weights $w_k=r^k$, gives affine lower bounds for the point
value $f(r)$. We use both versions below.

Put
\begin{align*}
 r_1&=0.581522\ldots,\\
 \eta&=0.700000\ldots,\qquad C=3.2888.
\end{align*}
The displayed decimals are for orientation.  Throughout this section the
symbols $r_1$ and $\eta$ denote the same binary64 values stored with the
certificate data: the radius in (i) and (ii), the coefficient boxes in (i) and
(ii), the case split, and the sector relaxation in (iii) all use those common
values.  The verifier performs all subsequent arithmetic with Arb enclosures.
The following is the computer-assisted part of the improved proof.

The certificate has two parts. For $\abs{a_3} \le \eta$, point-value
certificates extend univalence from $r_0$ to $r_1$, and one affine integral
certificate bounds the longer integral $R_{r_1}$. For
$\abs{a_3} \ge \eta$, we keep the universal radius $r_0$ and verify a finite
envelope of minorants of $\Phi_{r_0}$ over the coefficient body and the
balanced-measure parameters.

\begin{proposition}[Variable-radius certificate]\label{prop:variablecert}
The stored non-negative multiplier vectors have the following rigorously
verified consequences.
\begin{enumerate}
\item[(i)] If $\abs{b_2}\le1$ and $\abs{b_3}\le\eta$, then
\[
 f(x)\ge0.000936\qquad(r_0\le x\le r_1)
\]
for every real-coefficient $f\in\Bcl$ with those coefficients.
\item[(ii)] On the same coefficient box,
\begin{equation}\label{eq:nearcut}
 \Phi_{r_1}(b_2,b_3)
 \ge \alpha_0+\alpha_2b_2+\alpha_3b_3,
\end{equation}
where $\alpha_0,\alpha_2,\alpha_3$ are the exact coefficients defined by the
stored certificate. For orientation,
\[
 (\alpha_0,\alpha_2,\alpha_3)
 \approx(0.505011,0.056695,0.016797).
\]
Arb evaluation of the exact certificate coefficients gives
\begin{align}
 \alpha_2-2\eta\abs{\alpha_3}
   &\ge0.0331,\label{eq:slopemargin}\\
 \alpha_0-\alpha_2-\frac{\sqrt{3}}{4}
   &\ge0.015304.\label{eq:neargain}
\end{align}
Moreover,
\[
 2(\alpha_2+\eta\abs{\alpha_3})\le0.137,
 \qquad
 \alpha_0-\alpha_2-\eta\abs{\alpha_3}\ge0.436.
\]
\item[(iii)] Let $\ell_j(b_2,b_3)=e_j+p_jb_2+q_jb_3$ be the $47$ stored,
outward-rounded affine minorants of $\Phi_{r_0}$ and put
\[
 H_a(\theta)=\max_{1\le j\le 47}
 \ell_j\bigl(\Real(a_2e^{2i\theta}),\Real(a_3e^{3i\theta})\bigr).
\]
For every coefficient pair of a function in $\Bcl$ with $\abs{a_3}\ge\eta$,
and every probability measure $\mu$ on the circle with $\nu_1=0$,
\begin{equation}\label{eq:awaycert}
 \int H_a(\theta)\,d\mu(\theta)
 \ge\frac{\sqrt{3}}{4}+0.0153.
\end{equation}
\end{enumerate}
\end{proposition}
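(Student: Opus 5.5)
The plan is to reduce each of (i)--(iii) to finitely many inequalities in the stored data, each checked in Arb ball arithmetic. The only analytic input needed is the certificate inequality, Theorem~\ref{thm:cert}, in its two variants: objective weights $w_k=r^k$ for point values, and $w_k=\int_0^{r_1}x^k\,dx$ for $\Phi_{r_1}$.

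For (i) we cover $[r_0,r_1]$ by finitely many points $x_i$ with small spacing. At each $x_i$ a stored multiplier vector gives, by the proof of Theorem~\ref{thm:cert} with $w_k=x_i^k$ and with $T_0$ replaced by the corresponding geometric tail bound, an affine minorant $f(x_i)\ge e^{(i)}+d^{(i)}_2b_2+d^{(i)}_3b_3$. Its minimum over the box $\abs{b_2}\le1$, $\abs{b_3}\le\eta$ is $e^{(i)}-\abs{d^{(i)}_2}-\eta\abs{d^{(i)}_3}$, which we enclose. To pass from the grid to the whole interval we bound $\abs{f'(x)}$ on $[r_0,r_1]$ uniformly over $\Bcl$. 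A Cauchy estimate on a circle of radius $\rho>r_1$ gives $\abs{f'(x)}\le \sup_{\abs z=\rho}\abs{f}\cdot\rho/(\rho-x)^2\le \rho/((1-\rho^2)(\rho-x)^2)$; alternatively one can use $\abs{f'}\le\sum k B_k x^{k-1}$. The grid spacing times this Lipschitz constant is then subtracted, and the minimum over $i$ must stay at least $0.000936$. Together with Theorem~\ref{thm:bonk} this is exactly what will make $\Real f>0$ on $\D_{r_1}$ after symmetrisation (Lemma~\ref{lem:sym}).

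For (ii), a single multiplier vector with $r_0$ replaced by $r_1$ in $w_k$ and $T_0$ gives \eqref{eq:nearcut} directly from Theorem~\ref{thm:cert}, with $(\alpha_0,\alpha_2,\alpha_3)=(e_0,d_2,d_3)$. The inequalities \eqref{eq:slopemargin}, \eqref{eq:neargain} and the two auxiliary bounds are then explicit expressions in these enclosed numbers and $\eta$, so we simply evaluate them in Arb.

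Part (iii) is the main obstacle, because it is a claim about a continuum: the compact set of pairs $(a_2,a_3)$ with $\abs{a_2}\le1$ and $\eta\le\abs{a_3}\le C$, and all balanced measures $\mu$. Each $\ell_j$ is a minorant of $\Phi_{r_0}$ by Theorem~\ref{thm:cert}, so we only need the inequality \eqref{eq:awaycert} for the envelope $H_a$. First we normalise by rotation. Replacing $\theta$ by $\theta+\tau$ and $\mu$ by its translate preserves $\nu_1=0$, so we may take $a_3=s>0$ real and $a_2=te^{i\varphi}$. Next we dualise the inner problem. Since $H_a\ge\sum_j\lambda_j(\theta)\ell_j$ for any measurable convex weights, and by the minimax argument of Step~1 of the proof of Proposition~\ref{prop:pooled}, it suffices to find $c\in\C$ with $H_a(\theta)-\Real(ce^{-i\theta})\ge \sqrt3/4+0.0153$ for all $\theta$. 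This becomes a pointwise inequality in $\theta$, the parameters $(t,\varphi,s)$ and $c$. We then branch and bound: subdivide the parameter box $(t,\varphi,s)\in[0,1]\times[0,2\pi]\times[\eta,C]$, and on each cell choose a fixed $c$ (from floating-point search) and verify the inequality over $\theta$ in interval subcells. On each cell we use the max over $j$ of interval enclosures, exploiting that each $\ell_j$ is Lipschitz in all variables. A cleaner alternative reduces to the moment region of Section~\ref{sec:moments}, the ``sector relaxation'' the statement alludes to: restrict $\theta$ to sectors, use one $\ell_j$ per sector, and bound the resulting moment expressions using Theorem~\ref{thm:moment}. The difficulty will be cells near the worst configuration, where the margin is tiny and refinement may blow up. I would first try adaptive bisection, with the pooling of Remark~\ref{rem:mix} used to add minorants where needed.
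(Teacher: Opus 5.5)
Parts (i) and (ii) are sound. In (ii) you do exactly what the paper does. In (i) you pass from grid points to the whole interval with a uniform Lipschitz bound on $f'$, which is of order $40$ on $[r_0,r_1]$. That works, but it forces a much finer grid. The paper avoids any derivative bound. A fixed multiplier vector gives a minorant whose coefficients $e_j(x),p_j(x),q_j(x)$ depend explicitly on $x$ through $w_k=x^k$, and these are enclosed in Arb over whole subintervals.

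Part (iii) has a genuine gap. You relax the coefficient set to the product $\{\abs{a_2}\le1\}\times\{\eta\le\abs{a_3}\le C\}$ with independent phases. On that set \eqref{eq:awaycert} is false, so your subdivision can never close.

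To see this, let $g$ be the even degree-$44$ witness of Section~\ref{sec:limits}. Put $\beta_2=a_2(g)$; recall $\int_0^{r_0}g\le\sqrt3/4+0.0151972$. Take $a_3=\eta$ and $a_2=\beta_2e^{-i\pi/3}$, a point of your box. Take $\mu=\frac12(\delta_{\pi/6}+\delta_{7\pi/6})$, which has $\nu_1=0$. Both atoms carry $(b_2,b_3)=(\beta_2,0)$, so
\[
\int H_a\,d\mu=\max_j\ell_j(\beta_2,0)\le\Phi_{r_0}(\beta_2,0)\le\int_0^{r_0}g<\frac{\sqrt3}{4}+0.0153 .
\]
Integrating $H_a(\theta)-\Real(ce^{-i\theta})\ge\sqrt3/4+0.0153$ against this $\mu$ would give a contradiction, so no centre $c$ exists at this parameter. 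Your moment-region alternative uses the same information about $(a_2,a_3)$ and fails for the same reason.

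The missing idea is that the true coefficient body couples $a_2$ and $a_3$. Suppose $b_2(\theta_0)\approx-1$. Then $b_2\approx1$ in the directions $\theta_0\pm\pi/2$. There the Bloch inequality at small radii, \eqref{eq:coeffbody}, forces $b_3(\theta_0\pm\pi/2)=\pm\Im(a_3e^{3i\theta_0})$ to be small. Hence $\abs{b_3(\theta_0)}$ is close to $\abs{a_3}\ge\eta$, which is what rules out the configuration above.

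The paper builds this into a polyhedral outer relaxation in $\R^4$ with the following ingredients:
\begin{itemize}
\item supporting lines of both coefficient discs;
\item \eqref{eq:coeffbody} at $12$ radii and $72$ directions;
\item $24$ phase sectors that linearise the nonconvex condition $\abs{a_3}\ge\eta$. This, not a splitting of $\theta$, is the ``sector relaxation''.
\end{itemize}
It then handles all coefficient vectors at once by LP duality, through the multipliers $\zeta$ in \eqref{eq:boxlower}. Only the two-parameter family \eqref{eq:uvparam} of extreme balanced measures with at most three atoms is subdivided. Your dual-in-$c$ scheme could in principle be repaired by also discarding parameter cells that violate such coupling constraints. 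Without some coupling constraint, no amount of refinement can succeed.
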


\begin{proof}
Theorem~\ref{thm:cert} applies without change to the two objectives used here.
For an integral up to $r_1$ its objective coefficients are
$w_k=r_1^{k+1}/(k+1)$, and for a point value at $x$ they are $w_k=x^k$.
In the latter case the objective tail is bounded, uniformly on a radial
interval with upper endpoint $r$, by
\begin{equation}\label{eq:pointtail}
 \frac{e}{2}\, r^{K+1}
  \left(\frac{K+3}{1-r} +\frac{r}{(1-r)^2}\right).
\end{equation}
Indeed, this is the closed form of
$\frac{e}{2} \sum_{k>K}(k+2)r^k$, and hence follows from
Lemma~\ref{lem:elem}(iii).

For (i), the stored data consist of one point-value multiplier vector at each
of the $17$ equally spaced nodes from $r_0$ to $r_1$.  The $16$ intervening
radial intervals are each divided into $32$ closed subintervals.  The lower
endpoint of the first interval is rounded down by one binary64 step, so it is
strictly below the exact value $r_0=1/\sqrt{3}$; thus the certified intervals
cover the claimed closed annulus without an endpoint gap.  Fix one such
subinterval $I$ and let $r$ be its upper endpoint.  In particular, these
intervals cover the circle $\abs{z}=r_0$, while Bonk's theorem supplies strict
positivity inside that circle.  Each multiplier vector at an endpoint of the
parent interval gives, for every $x\in I$,
an inequality
\[
   f(x) \ge e_j(x) + p_j(x)b_2 + q_j(x)b_3
        \ge e_j(x) - \abs{p_j(x)} - \eta\abs{q_j(x)}.
\]
Arb evaluates the three functions on the whole interval~$I$, using
\eqref{eq:pointtail} with this upper endpoint for the omitted tail.  Taking
the larger of the two resulting lower bounds is legitimate because both
minorants hold throughout~$I$.  The least lower endpoint over all
$16\cdot32=512$ subintervals is
$0.000936855231321\ldots$, proving (i) uniformly on the full closed annulus.

For (ii), the stored integral multiplier vector and Theorem~\ref{thm:cert},
with the $r_1$ integral weights, give
\eqref{eq:nearcut}.  All finite sums and both tails are recomputed in Arb.
Direct outward-rounded evaluation of the resulting Arb coefficients gives
\eqref{eq:slopemargin}, \eqref{eq:neargain}, and the two final inequalities
in (ii).

It remains to prove (iii).  We first specify the coefficient relaxation.  Put
\[
 x=(\Real a_2,\Im a_2,\Real a_3,\Im a_3)\in\R^4.
\]
We enlarge the feasible set of~$x$ to a polyhedron consisting of $96$
outward-rounded supporting directions for each of $\abs{a_2}\le1$ and
$\abs{a_3}\le C$, together with $72$ directions at each of the $12$ auxiliary
sampling radii
\[
 0.004,0.008,0.015,0.025,0.04,0.06,0.09,0.13,0.18,0.24,0.31,0.39.
\]
The latter inequalities are
\begin{equation}\label{eq:coeffbody}
  b_2(\tau)+\rho\,b_3(\tau)
     \le \frac{1}{1-\rho^2}+\sum_{k\ge4}B_k\,\rho^{k-2},
\end{equation}
which follow directly from the defining Bloch inequality and
Lemma~\ref{lem:elem}(iii). The right sides, trigonometric coefficients and
tails are enclosed in Arb before being rounded outwards.

Each of the $47$ affine minorants is independently reconstructed by
Theorem~\ref{thm:cert}.  If $(\widetilde e_j,\widetilde p_j,
\widetilde q_j)$ are its Arb-enclosed coefficients, the stored binary slopes
$p_j,q_j$ are made safe on $\abs{b_2}\le1$, $\abs{b_3}\le C$ by replacing the
constant with a downward rounding of
\[
   \widetilde e_j-\abs{\widetilde p_j-p_j} - C\,\abs{\widetilde q_j-q_j}.
\]
This proves that every displayed $\ell_j$ is a global lower bound on the box
where it is used.

For a fixed coefficient pair, the map
$\mu\mapsto\int H_a\,d\mu$ is continuous and affine, so its minimum over the
compact convex set of probability measures with $\nu_1=0$ is attained at an
extreme measure. Such an extreme measure has at most three atoms. Indeed, if
its support meets four pairwise disjoint Borel sets $E_1,\ldots,E_4$ of
positive measure, the four vectors
\[
 \int_{E_j}(1,\cos\theta,\sin\theta)\,d\mu(\theta)\in\R^3
 \qquad(1\le j\le4)
\]
are linearly dependent. The resulting nonzero signed measure, obtained by
taking the same linear combination of the restrictions $\mu|_{E_j}$, has
zero total mass and zero first moment. A sufficiently small positive or
negative multiple may therefore be added to $\mu$, expressing it as the
midpoint of two distinct admissible probability measures. This contradicts
extremality.

Rotation can also be normalised away. If the measure is shifted by
$\theta\mapsto\theta-\tau$ and simultaneously
\[
 (a_2,a_3)\mapsto(e^{2i\tau}a_2,e^{3i\tau}a_3),
\]
then $\nu_1=0$, membership of the coefficient pair in the true coefficient
body, and the value of $\int H_a\,d\mu$ are unchanged. The transformed pair is
still covered by one of the $24$ phase sectors below. We may consequently put
one atom at angle zero. Relabel the atoms so that the next angular gap is the
smallest; it is at most $2\pi/3$. Writing this gap as $\pi u$, the condition
that the origin lie in the convex hull of the three points places the third
angle between $\pi$ and $\pi(1+u)$. Thus, for some
$0\le u\le2/3$ and $0\le v\le1$, the extreme measures are parametrised by
\begin{equation}\label{eq:uvparam}
 \begin{gathered}
 \theta=(0,\pi u,\pi(1+uv)),\\
 (p_0,p_1,p_2)=\frac{1}{A+B+D}(A,B,D),\\
 A=(1-v)\operatorname{sinc}_\pi(u(1-v)),\quad
 B=v\operatorname{sinc}_\pi(uv),\quad
 D=\operatorname{sinc}_\pi(u),
 \end{gathered}
\end{equation}
where $\operatorname{sinc}_\pi(t)=\sin(\pi t)/(\pi t)$, continuously extended
at zero. The boundary includes the balanced two-atom measures. Finally,
$\abs{a_3}\ge\eta$ is covered by $24$ closed phase sectors.  In sector $s$ we
impose the weaker linear condition
\[
 \Real(e^{-2\pi i s/24}a_3)\ge\eta\cos(\pi/24).
\]
This is valid because some sector centre differs from $\arg a_3$ by at most
$\pi/24$.  Let the resulting polyhedron in sector~$s$ be
$P_s=\{x:Gx\le h\}$.  It contains every true coefficient vector belonging to
that sector.

We now give the box inequality used in the subdivision.  If the three atoms
in \eqref{eq:uvparam} have weights $\omega_i$ and angles $\theta_i$, define
\[
 v_j(\theta)=
 \bigl(p_j\cos2\theta,-p_j\sin2\theta,
       q_j\cos3\theta,-q_j\sin3\theta\bigr).
\]
For a closed box $Q\subset[0,2/3]\times[0,1]$, a floating-point LP at the
midpoint supplies numbers $\alpha_{ij}\ge0$, $\sum_j\alpha_{ij}=1$, and
$\zeta_m\ge0$.  They are used only as candidate multipliers: their binary
values are thereafter interpreted exactly, and each $\alpha$-row is
renormalised as an exact non-negative rational probability vector.  On the
whole box Arb encloses
\begin{equation}\label{eq:boxEV}
 E=\sum_{i=0}^2\omega_i\sum_j\alpha_{ij}e_j,\qquad
 V=\sum_{i=0}^2\omega_i\sum_j\alpha_{ij}v_j(\theta_i),\qquad
 Z=V+G^T\zeta .
\end{equation}
For every $(u,v)\in Q$ and every true coefficient vector $x$ belonging to
sector~$s$, we have $x\in P_s$, $\abs{x_1},\abs{x_2}\le1$, and
$\abs{x_3},\abs{x_4}\le C$.  Convexity of the maximum and $\zeta\ge0$ therefore
give
\begin{align}
 \smash[b]{\sum_{i=0}^2\omega_iH_a(\theta_i)}
 &\ge E+V \cdot x \notag\\
 &=E-\zeta \cdot h+Z \cdot x
   +\zeta \cdot (h-Gx) \notag\\
 &\ge E-\zeta \cdot h
   -\abs{Z_1}-\abs{Z_2}-C(\abs{Z_3}+\abs{Z_4}).
 \label{eq:boxlower}
\end{align}
In evaluating the last line, the program takes the lower endpoint of the Arb
enclosure of $E$ and the upper endpoints of the enclosures of each
$\abs{Z_i}$.  Thus \eqref{eq:boxlower} is a rigorous lower bound even when the
midpoint LP basis ceases to be optimal elsewhere in~$Q$.

Starting with a $40\times40$ grid, the program evaluates~\eqref{eq:boxlower} in
Arb on each closed box.  A box is discarded only when the outward-rounded lower
endpoint is strictly larger than $\sqrt{3}/4+0.0153$; otherwise it is bisected and
the test is repeated.  For each of the $24$ sectors this process terminated with
an empty frontier. The parametrisation and the interval evaluation are
continuous on the closed rectangle (the sinc function is evaluated by its
continuous extension), so the boundary, including every balanced two-atom
measure, is part of the same finite cover.  Consequently~\eqref{eq:awaycert}
holds for every extreme balanced measure.  Affinity and the preceding
extreme-point reduction extend it to every probability measure with $\nu_1=0$,
which proves (iii).
\end{proof}

\begin{proof}[Proof of Theorem~\ref{thm:main}]
Let $F$ satisfy \eqref{eq:normalisation}, put $f=F'$, and split into two
cases. First suppose $\abs{a_3}\le\eta$. Bonk's theorem and
Proposition~\ref{prop:variablecert}(i), applied after the symmetrisation
\eqref{eq:sym} in every direction, give $\Real f(z)>0$ for $\abs{z}<r_1$.
Consequently $F$ is univalent on $\D_{r_1}$.

For a probability measure with $\nu_1=0$, integrate \eqref{eq:nearcut}, use
$\abs{a_2}\le1$, $\abs{a_3}\le\eta$, and set $s=\abs{\nu_2}$. Theorem~\ref{thm:moment}
gives
\[
 \int R_{r_1}\,d\mu
 \ge \alpha_0-\alpha_2s-\eta\abs{\alpha_3}(1-s^2).
\]
By \eqref{eq:slopemargin} the right side decreases for $0\le s\le1$; its
minimum is $\alpha_0-\alpha_2$, which exceeds $\sqrt{3}/4+0.0153$ by
\eqref{eq:neargain}. Define the affine minorant
\[
 \widetilde R(\theta)=\alpha_0+
   \alpha_2\Real(a_2e^{2i\theta})+
   \alpha_3\Real(a_3e^{3i\theta})
\]
and put $A=\alpha_2+\eta\abs{\alpha_3}$. Then
$R_{r_1}\ge\widetilde R$ and
$\abs{\widetilde R(\theta)-\alpha_0}\le A$. At $c=0$ the shifted minimum of
$\widetilde R$ is at least $\alpha_0-A$, whereas if $\abs{c}>2A$, taking
$\theta=\arg c$ makes it less than
$\alpha_0+A-\abs{c}<\alpha_0-A$. Thus its supremum over centres is the maximum
over the compact disc $\abs{c}\le2A$. Sion duality, applied to
$\widetilde R$ exactly as in Step~1 of Section~\ref{sec:proof}, and the moment
bound above now supply a centre $c$ in that disc for which
\[
 \min_\theta\bigl[\widetilde R(\theta)-
       \Real(ce^{-i\theta})\bigr]
 \ge \frac{\sqrt{3}}{4}+0.0153.
\]
The same centre works for $R_{r_1}$ because $R_{r_1}\ge\widetilde R$. The
last two bounds in Proposition~\ref{prop:variablecert}(ii) therefore show
\[
 \abs{c}\le0.137<0.436\le\min_\theta R_{r_1}(\theta).
\]
Lemma~\ref{lem:disc} applies and proves more than the asserted result in this
case.

Now suppose $\abs{a_3}\ge\eta$. Each $\ell_j$ is a lower bound for
$\Phi_{r_0}$, so Lemma~\ref{lem:sym} gives $R_{r_0}(\theta)\ge H_a(\theta)$.
The envelope $H_a$ is continuous and bounded. Hence
\[
 c\longmapsto\min_\theta
   \bigl[H_a(\theta)-\Real(ce^{-i\theta})\bigr]
\]
is continuous and tends to $-\infty$ as $\abs{c}\to\infty$ (take
$\theta=\arg c$), so its supremum is attained. Proposition~
\ref{prop:variablecert}(iii) and Sion duality therefore give a maximizing
centre $c$ with
\[
 \min_\theta\bigl[H_a(\theta)-\Real(ce^{-i\theta})\bigr]
 \ge\frac{\sqrt{3}}{4}+0.0153.
\]
Since $R_{r_0}\ge H_a$, the same inequality holds for
$\varrho_{r_0}(c)$.
As in Section~\ref{sec:proof}, $R_{r_0}\le\operatorname{artanh}(r_0)$, so
$\abs{c}<0.211$. The pooled certificate gives
$\min_\theta R_{r_0}(\theta)\ge0.4149$, independently of the present case.
Thus Lemma~\ref{lem:disc}, with Bonk's theorem supplying univalence, applies
again. Taking the infimum over $F$ completes the proof.
\end{proof}

\section{Implementation and verification}\label{sec:impl}

The source code, certificate data, and reference-run records are archived at
\url{https://doi.org/10.5281/zenodo.21975862}. The computation has two stages,
deliberately separated.

\emph{Search} (\texttt{dual\_cert.py}, \texttt{make\_certificates.py}). Rows
\eqref{eq:row} are generated on a product grid of $56$ radii
$\rho \in [0.02, 0.95]$, clustered towards the outer edge, and $161$ angles
$\psi \in [0,\pi]$ (angles in $[\pi,2\pi]$ give identical rows). The
linearisation angles $\gamma_j$ are set to $\arg f^\star(\rho_je^{i\psi_j})$
for the current candidate optimiser $f^\star$, and refreshed over a handful of
rounds; $f^\star$ itself is recovered from the linear-program duals. Choosing
the multipliers so as to maximise the right-hand side of \eqref{eq:minorant} at
a prescribed anchor $(b_2,b_3)$, or to maximise the closed form
\eqref{eq:regionmin}, is itself a linear program, solved with
HiGHS through SciPy in double precision. Roughly $10^2$ of the several
thousand candidate rows receive a nonzero multiplier. None of this is trusted:
the output is a list of numbers $\lambda_j, \rho_j, \psi_j, \gamma_j$.
The search takes about five minutes for the coarse certificate and
ten minutes for the fine certificate on an Apple~M4.

\emph{Verification} (\texttt{certify\_bloch.py}). Working in Arb ball
arithmetic~\cite{Arb} at $40$ decimal digits, the program checks $\lambda_j \ge
0$, recomputes $d_k$ for $0 \le k \le K = 320$ directly from \eqref{eq:row},
bounds $B_k$ by rigorous enclosures of $((k+2)/k)^{k/2}(k+2)/2$, sums the tail
terms $T$ and $T_0$ in closed form, and evaluates \eqref{eq:e0},
Theorem~\ref{thm:a3cert} and \eqref{eq:regionmin}. Every quantity reported for
the pooled certificate is the outward-rounded lower end of the resulting
enclosure. The Arb computation underlying Proposition~\ref{prop:pooled} takes
less than a second. On the finer certificate, the key Arb enclosures are
\begin{align*}
 \text{upper bound for }\abs{a_3}&:\quad
   [3.28877762819\mathbin{\pm}6.87\cdot10^{-13}],\\
 \Phi_{r_0}(-1,0)-\frac{\sqrt{3}}{4}&\ge
   [0.0149407644273\mathbin{\pm}4.65\cdot10^{-14}],\\
 B-\frac{\sqrt{3}}{4}&\ge
   [0.0114402996202\mathbin{\pm}3.94\cdot10^{-15}],\\
 B&\ge[0.444453001512\mathbin{\pm}4.16\cdot10^{-13}],\\
 \min_\theta R_{r_0}(\theta)&\ge
   [0.41491575\mathbin{\pm}4.17\cdot10^{-9}].
\end{align*}
For the coarser certificate, the second enclosure is instead
$[0.0150346379669\mathbin{\pm}4.55\cdot10^{-14}]$, which is the value
quoted in Proposition~\ref{prop:phi10}. The last line is the bound
$\min_\theta R \ge 0.4149$ used both in Step 4 of the proof of
Proposition~\ref{prop:pooled} and for centre placement in the away branch of
Theorem~\ref{thm:main}.

\emph{Variable-radius verification}. The program and its stored data are
\begin{center}
\texttt{variable\_radius\_certificate.py},\qquad
\texttt{variable\_radius\_certificate.npz}.
\end{center}
The data contain the integral, point-value and additional away-branch
multipliers. The command
\begin{verbatim}
python variable_radius_certificate.py verify
\end{verbatim}
reconstructs the near certificates and the coefficient polyhedron in Arb. It
gives the outward-rounded bounds
\begin{align*}
 \text{uniform positivity margin}&\ge0.000936,\\
 \text{near moment slope margin}&\ge0.0331,\\
 \abs{c}&\le0.137<0.436\le\min_\theta R_{r_1}(\theta),\\
 \text{near moment gain}&\ge0.015304.
\end{align*}
The rigorous away calculation is split into independently restartable commands
\begin{verbatim}
python variable_radius_certificate.py rigorous --sector J --target 0.0153
\end{verbatim}
for $J=0,\ldots,23$. Each completed with output of the form
\begin{verbatim}
sector J: PASS > 0.0153; certified_boxes=..., open=0; elapsed=...
\end{verbatim}
Checkpoints store the uncertified box frontier, so interruption and resumption
do not alter the mathematical calculation. On completion the final zero
records that no box remains open.  A completed checkpoint contains only this
empty frontier and summary counters; it does not retain the discarded boxes
or their dual witnesses and is therefore not a replay certificate.
Independent verification requires starting without completed checkpoints and
rerunning all $24$ sectors.

The computational cost here is substantially greater than for
Proposition~\ref{prop:pooled}. The $24$ sector calculations are independent and
may be run in parallel. The calculations certified $11{,}443{,}518$ terminal
boxes in total. On the Apple M4 machine used for the final computation,
individual sectors took between roughly $2.5$ and $7$ hours, and the complete
verification of Theorem~\ref{thm:main} was finished in just under one day of
wall-clock time.

\emph{Implementation sanity check} (\texttt{validate\_certificate.py}).
As a check on conventions such as signs and row scaling, separate from the
rigorous verification above, the finished inequality \eqref{eq:minnum} was
evaluated on explicitly constructed members of $\Bcl$.

The constant in Proposition~\ref{prop:pooled} is not optimal even within the
single-minorant method. A coarser run certifies $0.0113729924$, against
$0.0114402996$ for the run reported here. Its limiting factor is the moment
relaxation of Step 2, which caps that compact argument near $0.0115$; the
multi-cut, coefficient-dependent-radius calculation is what passes this
ceiling.

\section{What is not proved}\label{sec:limits}

Theorem~\ref{thm:main} states the round target $0.0153$. The near branch has
certified gain
\[
 0.0153040536989472,
\]
leaving a margin of about $4.05\cdot10^{-6}$, and the Arb subdivision has
cleared all $24$ away sectors at the stated target.  We make no claim above
$0.0153$: the present near certificate alone would already have to be
strengthened to reach $0.01531$. Reconnaissance in
the away branch gives minima $0.015866072532$ on a $20\times20$ mesh and
$0.015648602353$ on a finer rotated-antipodal boundary search, so the near
branch, rather than the completed away subdivision, is now the immediate
limitation of this dichotomy.

\emph{Sharpness of $\Phi_{r_0}$.} For the truncated numerical program at
$(-1,0)$,
the certified dual gain $0.0150346$ is close to the computed primal gain
$0.015073$ from an independent solve. This comparison is reconnaissance only
and does not bound the exact value of $\Phi_{r_0}(-1,0)$ from above.


\emph{Higher coefficients at the fixed radius.} Pinning $a_4,a_5,\ldots$ is
not a promising way to improve the fixed-radius part of the argument. Indeed,
for every centre $c$ the shift terms cancel in two antipodal directions, and
hence
\[
 \min_\theta\bigl[R_{r_0}(\theta)-\Real(ce^{-i\theta})\bigr]
 \le \frac{R_{r_0}(0)+R_{r_0}(\pi)}{2}
 =\int_0^{r_0}g(t)\,dt,\qquad
 g(z)=\frac{f(z)+f(-z)}{2}.
\]
The function $g$ is an even member of $\Bcl$. Consequently every refinement
of the fixed-radius coefficient reduction, even one pinning all Taylor
coefficients, is bounded above by
\[
 \Phi_{\mathrm{even}}:=\inf\Bigl\{\int_0^{r_0}g(t)\,dt:
 g\in\Bcl,\ g\text{ even}\Bigr\}.
\]
Two-sided numerical calculations place
$\Phi_{\mathrm{even}}-\sqrt{3}/4$ between $0.0150660$ and $0.0151838$ on our
finest runs, already below the gain in Theorem~\ref{thm:main}. More directly,
adding $a_4$ to the pooled calculation changed the gain by only $3\cdot10^{-7}$,
and its optimiser set the new $a_4$ slope to zero; the published pooled
certificate already has its slopes from degree $4$ onward of order $10^{-8}$.
These two-sided and slope figures are reconnaissance. Independently, the
accompanying Arb program \texttt{certify\_fixed\_radius\_ceiling.py} verifies
that an explicit even polynomial $g$ of degree $44$, whose exact rational
coefficients are recorded in the Zenodo archive, belongs to $\Bcl$ and satisfies
\[
 \int_0^{r_0}g(t)\,dt-\frac{\sqrt{3}}{4}
 \le 0.0151972097090942<0.0152.
\]
Near the origin the Bloch constraint is reduced to positivity of a univariate
polynomial; the remainder of the disc is covered by adaptive Arb rectangles.
Thus the fixed-radius reduction is rigorously capped below $0.0152$, whereas
Theorem~\ref{thm:main} reaches $0.0153$.
There is no conflict with the away-branch calculation: this witness is even,
so $a_3=0$ and it belongs to the near branch, where the proof uses the larger
radius $r_1$. That larger-radius branch is precisely what evades the
fixed-radius obstruction, and suggests that radius information, rather than
higher coefficients, is the useful direction for further improvement.

\end{document}